\newtheorem{theorem}{Theorem}[section]
\newtheorem{lemma}[theorem]{Lemma}
\newtheorem{corollary}[theorem]{Corollary}
\newtheorem{proposition}[theorem]{Proposition}
\newtheorem{conjecture}[theorem]{Conjecture}
\theoremstyle{definition}
\newtheorem{remark}[theorem]{Remark}
\newtheorem{definition}[theorem]{Definition}
\newtheorem{notation}[theorem]{Notation}
\numberwithin{equation}{section}
\def\ZZ{{\mathbb Z}}
\def\NN{{\mathbb N}}
\newcommand{\KK}{\mathbb{K}}
\newcommand{\Ann}{\mathrm{Ann}}
\def\sdepth{\operatorname{sdepth}}
\def\hdepth{\operatorname{hdepth}}
\def\depth{\operatorname{depth}}
\newcommand{\set}[1]{\{#1\}}
\newcommand{\with}{\,:\,}
\newcommand{\pij}{P^g_{I/J}}
\newcommand{\pijp}{P^{g'}_{I'/J'}}
\newcommand{\subsetIJ}{\subsetneq}
\begin{document}
\title[The behavior of Stanley depth under polarization]{The behavior of Stanley depth under polarization}

\author{Bogdan Ichim}

\address{Simion Stoilow Institute of Mathematics of the Romanian Academy, Research Unit 5, C.P. 1-764,
014700 Bucharest, Romania} \email{bogdan.ichim@imar.ro}

\author{Lukas Katth\"an}

\address{Universit\"at Osnabr\"uck, FB Mathematik/Informatik, 49069
Osnabr\"uck, Germany}\email{lukas.katthaen@uos.de}

\author{Julio Jos\'e Moyano-Fern\'andez}

\address{Universidad Jaume I, Campus de Riu Sec, Departamento de Matem\'aticas, 12071
Caste\-ll\'on de la Plana, Spain} \email{moyano@uji.es}

\subjclass[2010]{Primary: 05E40; Secondary: 16W50.}
\keywords{Monomial ideal; Stanley depth; Stanley decomposition; poset map; polarization.}
\thanks{The first author was partially supported  by project  PN-II-RU-TE-2012-3-0161, granted by the Romanian National Authority for Scientific Research,
CNCS - UEFISCDI. The second author was partially supported by
the German Research Council DFG-GRK~1916. The third author was partially supported by the Spanish Government Ministerio de Econom\'ia y Competitividad (MEC), grant MTM2012--36917--C03--03 in cooperation with the European Union in the framework of the founds ``FEDER'', as well as by the DFG-GRK~1916.}

\begin{abstract}
Let $\KK$ be a field, $R=\KK[X_1, \ldots , X_n]$ be the polynomial ring and $J \subsetIJ I$ two monomial ideals in $R$.
In this paper we show that
\[\sdepth\ {I/J} - \depth\ {I/J} = \sdepth\ {I^p/J^p}-\depth\ {I^p/J^p}, \]
where $\sdepth I/J$ denotes the Stanley depth and $I^p$ denotes the polarization.
This solves a conjecture by Herzog \cite{H} and reduces the famous Stanley conjecture (for modules of the form $I/J$) to the squarefree case.
As a consequence, the Stanley conjecture for algebras of the form $R/I$ and the well-known combinatorial conjecture that every Cohen-Macaulay simplicial complex is partitionable are equivalent.
\end{abstract}

\maketitle

\section{Introduction} \label{section:intro}

In 1982, R. Stanley conjectured in his celebrated paper \cite{St} an upper bound for the depth of a multigraded module of combinatorial nature, called \emph{Stanley depth} later on.
A proof of this conjecture turned out to be a difficult problem: it began soon to be called the \emph{Stanley conjecture}.
Since then, several authors began to study intensively this problem, starting with
the reformulation by Apel of the most important cases of the conjecture, i.e. the Stanley conjecture for a monomial ideal $I$ and for the factor ring $R/I$, see \cite[Conjecture 2]{A1} and \cite[Conjecture 1]{A2}.
Afterwards,
most of the research concentrates on the particular case of a module
of the form $I/J$ for two monomial ideals $J \subsetIJ I$ in the
polynomial ring $R=\KK[X_1, \ldots , X_n]$ over some field $\KK$; motivated by works of Herzog and Popescu \cite{HP,P1}, the Stanley conjecture became one important open problem in algebra and combinatorics.
\medskip

A natural first step to approach the Stanley conjecture is to try to reduce it to squarefree monomial ideals.
The arguable most straightforward method for this is via polarization.
This is a process which replaces an arbitrary monomial ideal $I$ with a certain squarefree monomial ideal $I^p$, such that $I$ can be recovered from $I^p$ by dividing out a regular sequence.
The behavior of many invariants of $I$ under polarization is well understood.
In particular, as polarization preserves the projective dimension, the change in the depth is just the change in the number of variables.
In view of the Stanley conjecture, one would hope for a similar behavior of the Stanley depth.
This was formulated as a conjecture by Herzog in the survey \cite{H} as follows:
\begin{conjecture}[Conjecture 62, \cite{H}]\label{conj:HH}
Let $I \subset R$ be a monomial ideal. Then
\[ \sdepth R/I - \depth R/I = \sdepth R^p/I^p - \depth R^p / I^p \]
where $R^p$ is the ring where $I^p$ is defined.
\end{conjecture}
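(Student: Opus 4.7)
The strategy is to reduce the conjecture to a single ``polarization step'' and analyze that step in isolation. If some variable $X_i$ appears with exponent at least two in a minimal generator of $I$, one performs a partial polarization by introducing a new variable $Y$ in $R' := R[Y]$ and replacing one occurrence of $X_i$ by $Y$ in the affected generators, producing an ideal $I' \subset R'$. The linear form $\theta := X_i - Y$ is regular modulo $I'$ and $R'/(I'+(\theta)) \cong R/I$, so $\depth R'/I' = \depth R/I + 1$. Iterating such steps across all variables eventually produces the full polarization, so it suffices to prove that $\sdepth R'/I' = \sdepth R/I + 1$ at each single step.

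The lower bound $\sdepth R'/I' \geq \sdepth R/I + 1$ is the easier direction. Starting from a Stanley decomposition $R/I = \bigoplus_k u_k \KK[Z_k]$ with every piece of Stanley depth at least $d$, one lifts it to a Stanley decomposition of $R'/I'$ of Stanley depth at least $d+1$ by appending $Y$ to each subring $\KK[Z_k]$, with minor adjustments accounting for those monomials of $R'/I'$ which involve $Y$ but whose reduction modulo $\theta$ lies in $I$.

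The upper bound $\sdepth R'/I' \leq \sdepth R/I + 1$ is the main obstacle. The naive specialization $Y \mapsto X_i$ applied to a Stanley decomposition of $R'/I'$ generally destroys the direct sum property, since distinct Stanley spaces can overlap after the identification. The plan is to use the combinatorial description of Stanley depth via partitions of a finite characteristic poset of monomials (Herzog--Vladoiu--Zheng) to reformulate the problem: one works with chain partitions of the poset $\pijp$ (with $J'=0$) attached to $R'/I'$, which carries a natural involution swapping $X_i$ and $Y$. The crucial step is a local exchange argument showing that some optimal chain partition of $\pijp$ may be chosen to be invariant under this involution. A symmetric partition descends canonically to a chain partition of the quotient poset $\pij$ attached to $R/I$ with the minimum chain length dropping by exactly one, yielding the required upper bound. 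Combining both inequalities and iterating over all exponent reductions completes the proof.
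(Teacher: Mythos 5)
Your overall strategy agrees with the paper: reduce to a single 1-step polarization, show that $\depth$ increases by $1$ via the regular element $X_i-Y$, and establish $\sdepth R'/I' = \sdepth R/I + 1$ as two inequalities. The lower bound ($\sdepth R'/I' \geq \sdepth R/I + 1$) is also handled in essentially the same spirit as the paper: lift a Stanley decomposition by adjoining $Y$ to each $Z_k$ with a small case distinction, and the paper then confirms exhaustiveness with a Hilbert series count.

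The upper bound is where your proposal diverges, and there is a genuine gap. You propose to exploit an involution on the characteristic poset of $R'/I'$ swapping $X_i$ and $Y$, and then to symmetrize an optimal interval partition. But no such involution exists in general: the $1$-step polarization replaces $X_i^2 \mid u$ by $\tfrac{Y}{X_i} u$ in the generators, which is asymmetric. Concretely, for $I = (X^2, XZ)$ one gets $I' = (XY, XZ)$, and swapping $X \leftrightarrow Y$ sends this to $(XY, YZ) \neq I'$; likewise for $I=(X^3)$ one gets $I'=(X^2Y)$ and the restricted characteristic poset inside $[0,(2,1)]$ has no coordinate-swap symmetry at all. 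Even in the rare cases where the ideal happens to be symmetric, the claim that an optimal partition \emph{can} be chosen invariant under the involution is a substantial combinatorial assertion that your ``local exchange argument'' leaves unproved, and this is precisely the kind of claim that resisted proof for years.

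What the paper does instead is deliberately \emph{non}-symmetric: it introduces the monotonic map $\phi:\NN\to\NN^2$ given by $\phi(i)=(i,0)$ for $i\le 1$ and $\phi(i)=(i-1,1)$ for $i\ge 2$ (padded by identities in the other coordinates). One checks that $\hat\Phi^{-1}(I^1)=I$ and $\hat\Phi^{-1}(J^1)=J$, that preimages of intervals under a map $\NN\to\NN^{n'}$ are intervals (Lemma~\ref{lem:1dim}), and that this map ``changes the Stanley depth by $-1$''; Proposition~\ref{prop:sdep} then converts any interval partition of $P^{g'}_{I^1/J^1}$ into one of $P^g_{I/J}$ losing at most one from the depth. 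This sidesteps the need for any symmetric structure, which is exactly the obstruction your approach runs into.
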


However, despite the naturality of the question and a considerable effort, it remained open for quite some time.
The main result of our paper (\Cref{cor:conj}) is the proof of Conjecture \ref{conj:HH}.
We show it even more generally for modules of the form $I/J$ for two monomial ideals $J \subsetIJ I \subseteq R$.
\medskip

This has two important consequences.
First, it immediately follows that $I/J$ satisfies the Stanley conjecture if and only if its polarization $I^p/J^p$ does so, cf. Corollary \ref{cor:polarization}.
Thus the Stanley conjecture for modules of the form $I/J$---in particular \cite[Conjecture~2]{A1} and \cite[Conjecture~1]{A2}---is effectively reduced to the squarefree case.

For the second consequence, as noted by Stanley himself in \cite[p.~191]{St}, the Stanley conjecture was formulated such that "the question raised in \cite[p.~149,~line 6]{St3} or \cite[Rmk.~5.2]{G} would follow affirmatively". This question was reformulated by Stanley \cite[Conjecture 2.7]{St2}, and asks whether 
every Cohen-Macaulay simplicial complex is partitionable. While it is clear that the Stanley conjecture implies the Garsia-Stanley conjecture on Cohen-Macaulay simplicial complexes, in this paper we show that the converse is also true, that is, the Garsia-Stanley conjecture on Cohen-Macaulay simplicial complexes implies what is (arguably) the most important case of the Stanley conjecture. More precisely, we show that Stanley's conjecture on Cohen-Macaulay simplicial complexes is equivalent to \cite[Conjecture~1]{A2} (see \Cref{cor:equ}).
\medskip

The content of the paper is organized as follows. Section \ref{section:pre} is devoted to explain the prerequisites and to fix notations.
Section \ref{section:posetmaps} introduces a suitable tool for the measure of the Stanley depth of a quotient $I/J$ as above, namely the \emph{maps changing the Stanley depth}. Helpful properties of those maps are recorded in both Lemma \ref{lem:interval} and Lemma \ref{lem:1dim}.

The most remarkable application of Section \ref{section:posetmaps} is that to the polarization of the quotient $I/J$; this is exactly the content of Section \ref{section:polarizations}, which also includes the main results of the paper---the already mentioned 
Corollaries \ref{cor:conj}, \ref{cor:polarization} and \ref{cor:equ}.

Further applications of the maps changing Stanley depth are described in Section \ref{section:furtherapplications}, closing the paper.
\medskip

The reader is referred to Bruns and Herzog \cite{BH}, and Miller and Sturmfels \cite{millersturm} for general definitions, notation, and background material.

\section{Prerequisites}\label{section:pre}
Let $\KK$ be a field. We consider the polynomial ring $R=\KK[X_1,\ldots , X_n]$ over $\KK$, endowed with the fine $\ZZ^n$-grading (i.e. the $\mathbb{Z}^n$-grading with $\deg X_i=e_i$ being the $i$-th vector of the canonical basis).

Let $M$ be a finitely generated graded $R$-module, and $m \in M$ homogeneous. Let $Z \subset \{X_1, \ldots , X_n\}$ be a subset of the set of indeterminates of $R$. The $\KK[Z]$-submodule $m\KK[Z]$ of $M$ is called a \emph{Stanley space} of $M$ if $m\KK[Z]$ is free (as $\KK[Z]$-submodule).
A \emph{Stanley decomposition} of $M$ is a finite family
\[
\mathcal{D}=(\KK[Z_i],m_i)_{i\in \Omega}
\]
in which $Z_i \subset \{X_1, \ldots ,X_n\}$ and $m_i\KK[Z_i]$ is a Stanley space of $M$ for each $i \in \Omega$ with
\[
M = \bigoplus_{i \in \Omega} m_i\KK[Z_i]
\]
as a graded or multigraded $\KK$-vector space. This direct sum carries the structure of $R$-module and has therefore a well-defined depth. The \emph{Stanley depth} $\sdepth\ M$ of $M$ is defined to be the maximal depth of a Stanley decomposition of $M$. The Stanley conjecture states the following inequality:

\begin{conjecture}[Stanley] $\sdepth\ M \geq \depth\ M$. \end{conjecture}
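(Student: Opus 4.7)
The plan is to attack the conjecture by a sequence of successive reductions, two of which I expect to be standard, the last of which I expect to be the essential difficulty.

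\emph{Reduction to $M = I/J$.} First I would reduce to the case of a $\ZZ^n$-graded module of the form $M = I/J$ for monomial ideals $J \subsetIJ I \subseteq R$. Any finitely generated $\ZZ^n$-graded $R$-module $M$ admits a prime filtration whose successive quotients are of the form $(R/P)(-a)$ with $P$ a monomial prime and $a \in \ZZ^n$; both $\depth$ and $\sdepth$ interact well enough with such filtrations (the depth lemma on one hand, and concatenation of Stanley decompositions on the other) that the general case follows from the $I/J$ case, which is anyway the form singled out in the paper and which subsumes the subcases $M = I$ and $M = R/I$ of Apel's reformulation.

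\emph{Reduction to the squarefree case via polarization.} With $I/J$ in hand, I would invoke the main result of the paper (\Cref{cor:conj}), which states that
\[ \sdepth\ I/J - \depth\ I/J \;=\; \sdepth\ I^p/J^p - \depth\ I^p/J^p. \]
Consequently $\sdepth\ I/J \geq \depth\ I/J$ if and only if the analogous inequality holds for the squarefree module $I^p/J^p$ over the larger polynomial ring $R^p$. This collapses the problem to its \emph{squarefree} incarnation, which is the combinatorial heart of the conjecture.

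\emph{The squarefree core.} In the squarefree setting, Stanley decompositions of $I^p/J^p$ correspond to partitions of the set-theoretic interval poset $\{F \subseteq [N] : X^F \in I^p \setminus J^p\}$ into Boolean subintervals, while $\depth\ I^p/J^p$ translates into a combinatorial lower bound arising from Cohen--Macaulayness (in the case $J=0$ with $I$ a Stanley--Reisner ideal, this is exactly the depth of the face ring). The ``maps changing the Stanley depth'' developed in \Cref{section:posetmaps}, together with their structural properties recorded in \Cref{lem:interval} and \Cref{lem:1dim}, provide the right machinery for modifying a Stanley decomposition without decreasing its depth. My strategy would be to begin with the canonical coarse decomposition induced by a linear extension of the interval poset and then iteratively apply such maps to absorb low-dimensional Boolean intervals into intervals of dimension at least $\depth\ I^p/J^p$.

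\emph{The main obstacle.} This last step is where the entire difficulty sits, and I do not expect to overcome it by the plan above alone. Specialized to $J = 0$ and $I$ the Stanley--Reisner ideal of a Cohen--Macaulay simplicial complex, producing such a partition into intervals of the required dimension is precisely the Garsia--Stanley partitionability conjecture, for which no general combinatorial mechanism is known; and by \Cref{cor:equ} the two conjectures are in fact equivalent in that most important case. So realistically the outcome of this plan is not a proof of the Stanley conjecture, but rather the conditional statement that the Stanley conjecture for $R/I$ reduces, cleanly and without loss, to the squarefree partitionability question---which is precisely what the present paper achieves.
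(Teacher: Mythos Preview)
The paper does not prove this statement: it is recorded as a \emph{conjecture}, not a theorem, and the paper's contribution is precisely the reduction you describe in your final paragraph (\Cref{cor:conj}, \Cref{cor:polarization}, \Cref{cor:equ}), not a proof of the conjecture itself. Your closing assessment is therefore the correct one, and there is nothing in the paper to compare your ``squarefree core'' step against.

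That said, your first reduction contains a genuine gap. From a prime filtration $0 = M_0 \subset M_1 \subset \cdots \subset M_r = M$ with quotients $M_i/M_{i-1} \cong (R/P_i)(-a_i)$ one does obtain, via the depth lemma and via concatenation of Stanley decompositions,
\[
\depth M \;\geq\; \min_i \depth(M_i/M_{i-1}) \quad\text{and}\quad \sdepth M \;\geq\; \min_i \sdepth(M_i/M_{i-1}).
\]
But both inequalities point the same way, so even assuming $\sdepth(M_i/M_{i-1}) \geq \depth(M_i/M_{i-1})$ for every $i$ yields only $\sdepth M \geq \min_i \depth(M_i/M_{i-1})$, which can be strictly smaller than $\depth M$ (prime filtrations typically pick up embedded-looking primes that are not associated to $M$, driving the minimum down). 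No reduction of the general Stanley conjecture to the $I/J$ case along these lines is known; this is why the literature, and the present paper, restrict attention to $I/J$ from the outset rather than deducing it. Your steps~2 and~3 are accurate summaries of what the paper accomplishes and of where the problem stands; step~1 should simply be dropped as a claimed reduction.
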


For a recent account on the subject, the reader is referred to Herzog's survey \cite{H}.
\medskip

We endow $\ZZ^n$ (or $\NN^n$) with the componentwise (partial) order: Given $a,b\in \ZZ^n$, we say that $a \leq b$ if and only if $a_i\le b_i$ for $i=1,\ldots,n$. Note that this partial order turns $\ZZ^n$ into a distributive lattice with meet $a\wedge b$ and join $a\vee b$ being the componentwise minimum and maximum, respectively. For $a,b\in \ZZ^n$  the \emph{interval between $a$ and $b$} is defined to be
\[
[a,b]:=\{c \in \ZZ^n \ |\  a \leq c \leq b \}.
\]
Remark here that for $n \in \NN$ we will use the notation $[n]:=\{1,\ldots ,n\}$.
\medskip

Monotonic poset maps will play a prominent role in Section \ref{section:posetmaps}. Let $P\subset\ZZ^n,\  P'\subset\ZZ^{n'}$ be posets. We call a map $\phi: P \to P'$ \emph{monotonic} if it preserves the order. Moreover, a $\phi$ is said to \emph{preserve joins} resp. \emph{meets} if it satisfies $\phi(a \vee b)=\phi(a) \vee \phi(b)$ resp. $\phi(a \wedge b)=\phi(a) \wedge \phi(b)$ for all $a, b \in P$.
\medskip

For $a=(a_1,\ldots ,a_n)\in \NN^n$ we denote by $X^a$ the monomial $X_1^{a_1}\cdots X_n^{a_n}$. Let $J \subsetIJ I \subset R$ be two monomial ideals. The quotient $I/J$ is a graded or multigraded $R$-module. Following Herzog, Vladoiu and Zheng \cite{HVZ}, we fix a vector $g \in \NN^n$ satisfying $a \leq g$ for all $X^a$ in minimal sets of generators for $I$ and $J$.
 The \emph{characteristic poset}
$P^g_{I/J}$ of $I/J$ with respect to $g$ is defined to be the (finite) subposet
\[
P^g_{I/J} :=\{a \in \NN^n : X^a \in I \setminus J, \ \ a \leq g\}
\]
of $\ZZ^n$.
A \emph{partition} of a finite poset $P$ is a disjoint union
\[
\mathcal{P}:\ P=\bigcup_{i=1}^{r} [a^i,b^i]
\]
of intervals. A key result in \cite{HVZ} describes a way to compute $\sdepth\ {I/J}$ from a Stanley decomposition of $I/J$ coming from a partition of the poset $P^g_{I/J}$. More precisely, by setting $Z_b:=\{X_j : b_j=g_j\}$ for each $b \in P^g_{I/J}$, and the function $\rho=\rho^g: P^g_{I/J} \rightarrow \ZZ_{\geq 0},~ \rho (c)=\# (Z_{c})$, Theorem 2.1 in \cite{HVZ} says:

\begin{theorem}[Herzog, Vladoiu, Zheng]\label{theo:HVZ}\hfill

\noindent (a) Let $\mathcal{P}:\ P^g_{I/J}=\bigcup_{i=1}^{r} [a^i,b^i]$ be a partition of $P^g_{I/J}$, then
\[
\mathcal{D}(\mathcal{P}): \ I / J = \bigoplus_{i=1}^r \left ( \bigoplus_c X^c \KK[Z_{b^i}] \right )
\]
is a Stanley decomposition of $I/J$, where the inner direct sum is taken over all $c \in [a^i,b^i]$ for which $c_j=a^i_j$ for all $j$ with $X_j \in Z_{b^i}$. Moreover, we have
\[
\sdepth\ {\mathcal{D}(\mathcal{P})}=\min \{\rho(b^i) : i = 1, \ldots , r\}.
\]
(b) Let $\mathcal{D}$ be a Stanley decomposition of $I/J$. Then there exists a partition $\mathcal{P}$ of $P^g_{I/J}$ such that $\sdepth\ {\mathcal{D}(\mathcal{P})}\geq \sdepth\ {\mathcal{D}}$. In particular, $\sdepth\ {I/J}$ can be computed as the maximum of the numbers $\sdepth\ {\mathcal{D}(\mathcal{P})}$, where $\mathcal{P}$ runs over the (finitely many) partitions of $P^g_{I/J}$.
\end{theorem}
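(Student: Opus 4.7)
The key tool is the \emph{truncation} $\tau(a) := a \wedge g$ for $a \in \NN^n$. By the choice of $g$, divisibility by any minimal generator of $I$ or $J$ is preserved by $\tau$, so $X^a \in I \setminus J$ if and only if $X^{\tau(a)} \in I \setminus J$; hence $\tau$ maps the exponents of monomials of $I/J$ onto $P^g_{I/J}$.

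\textbf{Part (a).} Given $X^a \in I \setminus J$, I would set $c := \tau(a)$ and use the partition $\mathcal{P}$ to locate the unique interval $[a^i, b^i] \ni c$. I would then produce the exponent $c' \in [a^i, b^i]$ with $c'_k = a^i_k$ for $X_k \in Z_{b^i}$ and $c'_k = a_k$ otherwise, and verify that $d := a - c'$ is nonnegative and supported on $Z_{b^i}$. The key checks are routine: for $X_k \notin Z_{b^i}$, the strict inequality $b^i_k < g_k$ forces $c_k = a_k$, placing it within $[a^i_k, b^i_k]$; and $a \geq c \geq a^i$ then yields non-negativity of $d$. Uniqueness of $(i, c')$ is forced by the constraints, giving a genuine direct sum. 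Each summand $X^{c'} \KK[Z_{b^i}]$ is a Stanley space of depth $|Z_{b^i}| = \rho(b^i)$, yielding the stated formula.

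\textbf{Part (b).} Let $\mathcal{D} = (\KK[Z_j], X^{a_j})_{j \in \Omega}$ be an arbitrary Stanley decomposition, and call $j$ \emph{admissible} if $a_j \leq g$. For each admissible $j$ I would define $\beta_j$ by setting $\beta_j^{(k)} := g_k$ for $X_k \in Z_j$ and $\beta_j^{(k)} := a_j^{(k)}$ otherwise. The crucial observation is that for $c \leq g$, membership $c \in [a_j, \beta_j]$ is equivalent to $X^c \in X^{a_j} \KK[Z_j]$. Since every $c \in P^g_{I/J}$ lies in a unique Stanley space of $\mathcal{D}$---which must be admissible, as $a_j \leq c \leq g$ forces $a_j \leq g$---the intervals $\{[a_j, \beta_j] : j \text{ admissible}\}$ form a partition $\mathcal{P}$ of $P^g_{I/J}$. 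The inclusion $Z_j \subseteq Z_{\beta_j}$ then gives $\rho(\beta_j) \geq |Z_j|$, and passing to minima yields $\sdepth \mathcal{D}(\mathcal{P}) \geq \sdepth \mathcal{D}$.

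\textbf{Main obstacle.} The conceptual subtlety is in part (b): one must rule out ``phantom'' contributions from non-admissible Stanley spaces (those with some $a_j^{(k)} > g_k$), whose monomials still lie in $I \setminus J$ and whose truncations could a priori produce overlapping intervals in $P^g_{I/J}$. The admissibility dichotomy---only admissible spaces can host $X^c$ with $c \leq g$---resolves this cleanly, so that the partition of $P^g_{I/J}$ is read off directly from the original Stanley decomposition rather than constructed by a truncation procedure on every Stanley space.
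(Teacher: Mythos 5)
The paper does not prove this theorem at all: it is stated as a black-box citation of \cite[Theorem~2.1]{HVZ} and attributed to Herzog, Vladoiu and Zheng, so there is no in-paper proof to compare your argument against. Your proof is correct and in essence reconstructs the original HVZ argument: part (a) via the truncation $\tau(a)=a\wedge g$ to locate the unique interval, and part (b) via the admissibility observation together with the map $a_j\mapsto\beta_j$ that pushes the $Z_j$-coordinates up to $g$.

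One small point is worth spelling out in part (a). The covering argument you give (starting from $X^a\in I\setminus J$ and producing the unique pair $(i,c')$) establishes the inclusion of $I/J$ into the proposed direct sum; you should also verify the reverse containment, that each Stanley space $X^{c'}\KK[Z_{b^i}]$ actually lies inside $I/J$. This is not automatic, since a priori a monomial $X^{c'+d}$ with $d$ supported on $Z_{b^i}$ could land in $J$. It follows quickly from the truncation equivalence you already record in your Plan: for such a monomial one checks $\tau(c'+d)\in[a^i,b^i]\subseteq P^g_{I/J}$ (for $X_k\notin Z_{b^i}$ one has $b^i_k<g_k$ so $\tau(c'+d)_k=c'_k$, while for $X_k\in Z_{b^i}$ one has $b^i_k=g_k$ and $a^i_k\le\tau(c'+d)_k\le g_k$), hence $X^{\tau(c'+d)}\in I\setminus J$ and therefore $X^{c'+d}\in I\setminus J$. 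With that added, both parts are complete.
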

\medskip

The main topic of this paper is the behavior of Stanley depth under polarization.
We recall the definition following Herzog and Hibi \cite{HH}.
Let $I \subset R$ be a monomial ideal with generators $u_1, \ldots, u_m$, where $u_i=\prod_{j=1}^{n} X_j^{a_{ij}}$ for $i=1, \ldots , m$. For each $j$ let $a_j=\max \set{a_{ij} : i=1, \ldots , m}$.
Set $a=(a_1,\ldots,a_n)$ and choose a $g \in \NN^n$ such that $a \leq g$. Set $R^p$ to be the polynomial ring
\[
R^p:=\KK[X_{jk} \with 1 \leq j \leq n, 1 \leq k \leq g_j].
\]

Then the \emph{polarization of $I$} is the squarefree monomial ideal $I^p \subset R^p$ generated by $v_1, \ldots , v_m$, where
\[
v_i=\prod_{j=1}^{n} \prod_{k=1}^{a_{ij}} X_{jk} \ \ \ \mbox{for} \ \ i=1, \ldots, m.
\]
\medskip

\section{Poset maps} \label{section:posetmaps}

Let $I,J$ be monomial ideals of $R$ such that $J \subsetIJ I$.
We are interested in measuring the Stanley depth of the deformations of the quotient $I/J$. The following kind of maps reveals to be a useful tool for that aim:

\begin{definition} \label{def:1}
Let $\ell \in \ZZ$ and $n, n' \in \NN$. A monotonic map $\phi: \NN^{n} \rightarrow \NN^{n'}$ is said to \emph{change the Stanley depth} by $\ell$ with respect
to $g \in \NN^{n}$ and $g' \in\NN^{n'}$, if it satisfies the following two conditions:
\begin{enumerate}
\item $\phi(g) \leq g'$
\item For each interval $[a',b'] \subset [0,g']$, the (restricted) preimage $\phi^{-1}([a',b']) \cap [0, g]$ can be written as a finite disjoint union $\bigcup_i [a^i, b^i]$ of intervals, such that
\[
\#\set{j\in[n] \with b^i_j=g_j} \geq \#\set{j\in[n'] \with b'_j=g'_j} + \ell \ \ \mbox{~for all }i.
\]
\end{enumerate}
\end{definition}

\begin{notation} 
Let $R = \KK[X_1,\ldots,X_n]$, $R' = \KK[X_1,\ldots,X_{n'}]$.
A map $\phi: \NN^{n} \rightarrow \NN^{n'}$ gives rise to a natural $\KK$-linear map
$\Phi: R \rightarrow R'$, 
which is defined on monomials as
$$
\Phi(X^a):=X^{\phi(a)}.
$$
and extended to $R$ linearly.
Note that this is \emph{not} a ring homomorphism.
\end{notation}

Next proposition justifies the name for a monotonic map \emph{changing the Stanley depth} of the previous definition:

\begin{proposition}\label{prop:sdep}
Let  $n, n' \in \NN$, $R = \KK[X_1,\ldots,X_n]$, $R' = \KK[X_1,\ldots,X_{n'}]$ be two polynomial rings and let $J' \subsetIJ I' \subset R'$ be monomial ideals.
Consider a monotonic map $\phi: \NN^{n} \rightarrow \NN^{n'}$ and set $I := \Phi^{-1}(I')$, $J := \Phi^{-1}(J')$.
Choose $g \in \NN^n$ and $g' \in \NN^{n'}$, such that every minimal generator of $I$ and $J$ divides $X^g$, and every minimal generator of $I'$ and $J'$ divides $X^{g'}$.
%
Let $\ell\in\ZZ$ and assume that $\phi$ changes the Stanley depth by $\ell$ with respect to $g$ and $g'$. Then
\begin{itemize}
	\item[(i)] $I$ and $J$ are monomial ideals, and
	\item[(ii)] $\sdepth\ I/J \geq \sdepth\ I'/J' + \ell$.
\end{itemize}
\end{proposition}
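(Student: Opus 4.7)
The plan is to reduce the assertion to the Herzog--Vladoiu--Zheng description of Stanley depth (\Cref{theo:HVZ}) by pulling back an optimal partition of $\pijp$ to one of $\pij$ along $\phi$. For (i), I would check that the exponent set $U := \set{a \in \NN^n \with X^{\phi(a)} \in I'}$ is an upper set of $\NN^n$: if $a \in U$ and $b \geq a$, then $\phi(b) \geq \phi(a)$ by monotonicity, so $X^{\phi(b)}$ is divisible by $X^{\phi(a)} \in I'$ and therefore lies in $I'$, giving $b \in U$. Hence $U$ is the exponent set of a monomial ideal, which is $I$; the same argument handles $J$.

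For (ii), the backbone is the identity
\[
\pij = \phi^{-1}(\pijp) \cap [0,g].
\]
I would verify this first: condition~(1) of \Cref{def:1} together with monotonicity yields $\phi([0,g]) \subseteq [0,g']$, so for $a \in [0,g]$ membership in $\pij$ (namely $X^a \in I \setminus J$, equivalently $X^{\phi(a)} \in I' \setminus J'$) is the same as $\phi(a) \in \pijp$. Next, using \Cref{theo:HVZ}(b), I fix a partition $\pijp = \bigcup_{k=1}^r [a'^k, b'^k]$ realizing $\sdepth\ I'/J' = \min_k \rho^{g'}(b'^k)$. Applying condition~(2) of \Cref{def:1} to each $[a'^k, b'^k]$ produces a disjoint decomposition $\phi^{-1}([a'^k, b'^k]) \cap [0,g] = \bigcup_i [a^{k,i}, b^{k,i}]$ with $\rho^g(b^{k,i}) \geq \rho^{g'}(b'^k) + \ell$. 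Gluing these across $k$ yields a partition $\mathcal{P}$ of $\pij$: disjointness across different $k$ is inherited from that of the $[a'^k, b'^k]$, while the union exhausts $\pij$ by the displayed identity. \Cref{theo:HVZ}(a) now furnishes a Stanley decomposition $\mathcal{D}(\mathcal{P})$ of $I/J$ with
\[
\sdepth\ \mathcal{D}(\mathcal{P}) = \min_{k,i} \rho^g(b^{k,i}) \geq \min_k \rho^{g'}(b'^k) + \ell = \sdepth\ I'/J' + \ell,
\]
establishing (ii).

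I do not expect a real obstacle in carrying out this plan: the two conditions defining a map that changes Stanley depth are evidently tailored to make exactly this pullback argument go through. The only points requiring care are the preimage identity for $\pij$ (a direct unfolding of definitions, leveraging $\phi(g) \leq g'$) and the verification that the assembled intervals form an honest partition of $\pij$ rather than merely a covering --- both routine.
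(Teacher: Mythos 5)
Your proposal is correct and follows exactly the paper's line of argument: establish the preimage identity $\pij = \phi^{-1}(\pijp) \cap [0,g]$, pull back an optimal interval partition of $\pijp$ using condition~(2) of \Cref{def:1}, and read off the Stanley depth bound via \Cref{theo:HVZ}. You simply spell out the routine verifications (upper-set argument for (i), disjointness and exhaustion of the glued intervals) that the paper leaves implicit.
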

\begin{proof}
	It is clear that $I$ and $J$ are monomial ideals (since monomial ideals correspond to subsets of $\NN^n$ that are closed under ``going up'', and taking the preimage under a monotonic map preserves this property).
	For the second claim, we compute the Stanley depth of $I'/J'$ via an interval partition of $\pijp$ (cf. Theorem \ref{theo:HVZ}).
	Note that the assumption $\phi(g) \leq g'$ together with the equalities $\Phi^{-1}(I') = I$ and $\Phi^{-1}(J') = J$ imply that $\phi^{-1}(\pijp) \cap [0,g] = \pij$.
	Hence, taking the preimages of the intervals in the partition of $\pijp$ yields an interval partition of $\pij$ of the required Stanley depth.
\end{proof}

The following results are useful for constructing maps $\phi$ which satisfy the conditions of the previous proposition:

\begin{lemma}\label{lem:interval}
	Let $n_1, n'_1, n_2, n'_2 \in \NN$.
	For $i=1,2$, let $\phi_i : \NN^{n_i} \rightarrow \NN^{n'_i}$ be  monotonic maps that change the Stanley depth by $\ell_i$ with respect to $g_i \in \NN^{n_i}$ and $g_i'  \in \NN^{n'_i}$.
	Then the product map
	\[ (\phi_1, \phi_2): \NN^{n_1+n_2} \rightarrow \NN^{n'_1+n'_2} \]
	changes the Stanley depth by $\ell_1 + \ell_2$ with respect to $(g_1,g_2)$ and $(g'_1, g_2')$.
\end{lemma}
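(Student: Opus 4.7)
The strategy is direct: condition (1) for the product is immediate componentwise, and condition (2) follows because intervals in $\NN^{n'_1+n'_2}$ factor as products of intervals, preimages under a product map factor correspondingly, and the count of ``saturated'' coordinates is additive across the product.

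\medskip

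First I would check condition (1). Since $\phi_i(g_i)\le g'_i$ for $i=1,2$, we have $(\phi_1,\phi_2)(g_1,g_2)=(\phi_1(g_1),\phi_2(g_2))\le(g'_1,g'_2)$ componentwise.

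\medskip

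For condition (2), let $[(a'_1,a'_2),(b'_1,b'_2)]\subset[0,(g'_1,g'_2)]$ be an interval. Any interval in the product poset splits as a product: this interval equals $[a'_1,b'_1]\times[a'_2,b'_2]$. Since $(\phi_1,\phi_2)$ acts independently on the two blocks of coordinates, its preimage also splits as a product, and intersecting with $[0,(g_1,g_2)]=[0,g_1]\times[0,g_2]$ gives
\[
\bigl(\phi_1^{-1}([a'_1,b'_1])\cap[0,g_1]\bigr)\times\bigl(\phi_2^{-1}([a'_2,b'_2])\cap[0,g_2]\bigr).
\]
By hypothesis each factor admits a disjoint interval decomposition; say $\bigcup_{j}[a^j_1,b^j_1]$ and $\bigcup_{k}[a^k_2,b^k_2]$. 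Taking all pairs yields the disjoint interval decomposition
\[
\bigcup_{j,k}\bigl[(a^j_1,a^k_2),(b^j_1,b^k_2)\bigr]
\]
of the restricted preimage.

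\medskip

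The last step is the count. For any product interval $[(a^j_1,a^k_2),(b^j_1,b^k_2)]$, the indices in $[n_1+n_2]$ where the top coincides with $(g_1,g_2)$ split into those coming from the first block and those from the second block, so
\[
\#\{i\in[n_1+n_2]:(b^j_1,b^k_2)_i=(g_1,g_2)_i\}
=\#\{i\in[n_1]:b^j_{1,i}=g_{1,i}\}+\#\{i\in[n_2]:b^k_{2,i}=g_{2,i}\}.
\]
Applying the hypotheses for $\phi_1$ and $\phi_2$ to each summand and adding, this is bounded below by
\[
\#\{i\in[n'_1]:b'_{1,i}=g'_{1,i}\}+\#\{i\in[n'_2]:b'_{2,i}=g'_{2,i}\}+\ell_1+\ell_2,
\]
which is exactly the required bound for the top $(b'_1,b'_2)$ with respect to $(g'_1,g'_2)$. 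Hence $(\phi_1,\phi_2)$ changes the Stanley depth by $\ell_1+\ell_2$.

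\medskip

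There is no real obstacle: the only thing to be careful about is the bookkeeping that intervals, preimages, and the saturated-coordinate count all respect the product structure, which is what makes the $\ell_i$ add.
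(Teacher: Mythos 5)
Your proof is correct and follows essentially the same route as the paper: factor the target interval, factor the preimage accordingly, take the pairwise products of the two interval decompositions, and observe that the saturated-coordinate count is additive across blocks. You spell out the final counting inequality explicitly, which the paper leaves as ``which implies the statement,'' but the underlying argument is the same.
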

\begin{proof}
	Let us denote the product map by $\phi := (\phi_1, \phi_2)$. It is enough to consider one interval $[(p_1,p_2), (q_1, q_2)] \subset \NN^{n'_1+n'_2}$.
	By assumption, the preimage $\phi_i^{-1}([p_i,q_i])\bigcap [0,g_i] = \bigcup_j[p^j_{i},q^j_{i}] \subset \NN^{n_i}$ is a disjoint finite union of intervals.
	Then
	\begin{align*}
	\phi^{-1} & ([(p_1,  p_2),  (q_1, q_2)])\cap [0,(g_1,g_2)] = \\
		&= \phi^{-1}(([p_1, q_1] \times \NN^{n_2}) \,\cap\,( \NN^{n_1} \times [p_2, q_2])) \,\cap\, [0,(g_1,g_2)]\\
		&= (\phi_1^{-1}([p_1, q_1]) \times \NN^{n_2}) \,\cap\, (\NN^{n_1} \times \phi_2^{-1}([p_2, q_2])) \cap ([0, g_1] \times \NN^{n_2}) \,\cap\, (\NN^{n_1} \times [0, g_2]) \\
		&= \left( (\phi_1^{-1}([p_1, q_1]) \cap [0,g_1] ) \times \NN^{n_2}\right)\,\cap\, \left(\NN^{n_1} \times (\phi_2^{-1}([p_2, q_2]) \cap [0,g_2])\right) \\
		&= \left( ( \bigcup_j[p^j_{1},q^j_{1}]) \times \NN^{n_2} \right)\,\cap\, \left(\NN^{n_1} \times ( \bigcup_j[p^j_{2},q^j_{2}]) \right) \\
		&= \bigcup_{j,k} [(p^j_{1}, p^k_{2}), (q^j_{1},q^k_{2})],
	\end{align*}
	which implies the statement.
\end{proof}

\begin{remark}\label{rem:id}
The most important special case of this lemma is when one of the maps is the identity. In this case, if $\phi$ is a map changing the Stanley depth by $\ell$, then we can pad it with identities to get a new map $\hat{\phi} = (id, \dotsc, id, \phi)$ that still changes the Stanley depth by $\ell$. In the sequel, $\hat{\phi}$ will always denote the padded version of $\phi$ and $\hat{\Phi}$ the padded version of $\Phi$.
\end{remark}

\begin{lemma}\label{lem:1dim}
	Every monotonic map $\phi: \NN \rightarrow \NN^{n'}$ changes the Stanley depth by $1-n'$ with respect to $g \in \NN$ and $g' := \phi(g)$.
\end{lemma}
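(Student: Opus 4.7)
My plan is to verify both conditions of Definition \ref{def:1} directly. Condition (1), namely $\phi(g) \leq g'$, will hold automatically by the very choice $g' := \phi(g)$. So all the content of the lemma lies in condition (2): for every interval $[a',b'] \subseteq [0,g']$, I need to decompose $\phi^{-1}([a',b']) \cap [0,g]$ into intervals with the required defect bound.

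The key structural observation I will exploit is that $\NN$ is \emph{totally} ordered, so for any monotonic map out of $\NN$ the preimage of a convex subset is convex, hence an interval (or empty). Concretely, if $x \leq z \leq y$ in $\NN$ with $\phi(x), \phi(y) \in [a',b']$, monotonicity gives $a' \leq \phi(x) \leq \phi(z) \leq \phi(y) \leq b'$. Consequently, $\phi^{-1}([a',b']) \cap [0,g]$ is either empty (the condition is then vacuous) or a single interval $[a,b] \subseteq [0,g] \subseteq \NN$, and I only need to produce one inequality per such preimage.

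The bridge between the one-sided and the $n'$-sided data is the equivalence $b = g \iff b' = g'$, which I would establish as follows: $b = g$ holds iff $g$ itself lies in $\phi^{-1}([a',b'])$, which is iff $\phi(g) = g' \in [a',b']$; since $b' \leq g'$ by hypothesis on $[a',b']$, this last condition collapses to $b' = g'$.

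Two cases then finish the argument. If $b = g$ (equivalently $b' = g'$), the LHS of the defect inequality equals $1$ and the RHS equals $n' + (1 - n') = 1$. If $b < g$ (equivalently $b' \neq g'$), at least one coordinate of $b'$ is strictly smaller than the corresponding coordinate of $g'$, so $\#\{j \in [n'] : b'_j = g'_j\} \leq n' - 1$, giving RHS $\leq 0 = $ LHS. The only genuinely delicate point is the interval--preimage observation; this is where the assumption that the source is one-dimensional is essential, and it is precisely this fact that will have to be compensated by combining such one-dimensional maps via \Cref{lem:interval} later in the paper.
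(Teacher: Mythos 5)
Your proof is correct and follows essentially the same route as the paper's: you observe that a monotonic map out of the totally ordered set $\NN$ pulls back an interval to an interval (or the empty set), and you then reduce condition (2) to the equivalence $b=g \iff b'=g'$ and a short case distinction on whether $b'=g'$. The only (minor) differences are that you explicitly handle the empty-preimage case and establish the full biconditional $b=g\iff b'=g'$, whereas the paper only states the one implication it needs and leaves both of these points implicit.
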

\begin{proof}
	Let $Q'=[a',b'] \subset [0,g'] \subset \NN^{n'}$ be an interval and let $Q := \phi^{-1}(Q') \cap [0,g]$ be its (restricted) preimage.
	Let $a$~resp.~$b \in \NN$ be the minimal~resp.~maximal element in $Q$.
	Then $Q \subset [a,b]$ and we claim that we have equality.
	For $c \in[a,b]$, it follows from $\phi(a) \leq \phi(c) \leq \phi(b)$ that $\phi(c) \in [a',b']$, because $\phi(a), \phi(b) \in [a',b']$. Thus $c \in Q$.
	So the preimage of an interval is again an interval.
	It remains to verify the condition (2) of Definition \ref{def:1}, namely
	\[
	\#\set{j\in[1] \with b_j=g_j} \geq \#\set{j\in[n'] \with b'_j=g'_j} + 1 - n' \ \ \mbox{~for all }i.
	\]
	If $b' < g'$, then the right hand side is nonpositive, so the condition is trivially satisfied. On the other hand, if $b' = g'$, then $b = g$ and thus the condition is also satisfied.
\end{proof}

We close this section with a precise description of the behavior of poset maps preserving joins and meets with respect to the property of changing Stanley depth. This result will be not needed in the sequel.

\begin{theorem} 
	Let $n, n' \in \NN$ and let $\phi: \NN^n \rightarrow \NN^{n'}$ be a map that preserves joins and meets.
	Then $\phi$ changes the Stanley depth by $n-n'$  with respect to $g \in \NN^n$ and $g' := \phi(g)$.
\end{theorem}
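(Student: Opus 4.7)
The plan is to verify the two conditions of Definition~\ref{def:1} directly with $\ell = n - n'$. Since $g' = \phi(g)$, condition~(1) holds trivially. For condition~(2), fix an interval $[a',b'] \subset [0,g']$ and consider the preimage $P := \phi^{-1}([a',b']) \cap [0,g]$; my expectation is that the joint preservation of meets and joins will force $P$ to be a single interval, and that a matching argument on non-saturated coordinates delivers the required inequality.

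The first step is to show $P$ is an interval. Since $[a',b']$ is itself closed under componentwise meets and joins and $\phi$ preserves both operations, $P$ is a finite sublattice of $[0,g]$ whenever it is nonempty (if it is empty, condition~(2) is vacuous). Let $c^{\min}$ and $c^{\max}$ be its global meet and join. Monotonicity then gives, for any $c \in [c^{\min}, c^{\max}]$, the chain $a' \leq \phi(c^{\min}) \leq \phi(c) \leq \phi(c^{\max}) \leq b'$, so $c \in P$ and $P = [c^{\min}, c^{\max}]$.

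The second step is the saturation inequality. With $\ell = n-n'$ and the single interval $b^1 = c^{\max}$, Definition~\ref{def:1}(2) reduces to
\[
\#\set{j \in [n] \with c^{\max}_j < g_j} \;\leq\; \#\set{k \in [n'] \with b'_k < g'_k}.
\]
I would construct an injection $\tau$ from the left-hand set into the right-hand one. For each $j$ with $c^{\max}_j < g_j$, the maximality of $c^{\max}$ in $P$ forces $\phi(c^{\max}+e_j) \not\leq b'$, so there exists $k \in [n']$ with $\phi(c^{\max}+e_j)_k > b'_k$; set $\tau(j) := k$. Since $\phi(c^{\max}+e_j)_k \leq \phi(g)_k = g'_k$, this $k$ automatically satisfies $b'_k < g'_k$.

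The main obstacle is showing $\tau$ is injective, and this is precisely where meet-preservation is essential. If $\tau(j_1) = \tau(j_2) = k$ for distinct $j_1, j_2$, then $(c^{\max}+e_{j_1}) \wedge (c^{\max}+e_{j_2}) = c^{\max}$, and meet-preservation gives
\[
\phi(c^{\max})_k \;=\; \min\bigl(\phi(c^{\max}+e_{j_1})_k,\ \phi(c^{\max}+e_{j_2})_k\bigr) \;>\; b'_k,
\]
contradicting $c^{\max} \in P$. Hence $\tau$ is injective, the desired inequality follows, and condition~(2) is established, completing the proof.
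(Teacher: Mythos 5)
Your proof is correct, but it takes a genuinely different route from the paper. The paper's proof first normalizes $\phi(0)=0$, derives monotonicity from join-preservation, and then shows (via meet-preservation forcing disjoint supports) that $\phi$ factors as a product of monotonic maps $\phi_i\colon \NN \to \NN^{n'_i}$; the result then follows from Lemma~\ref{lem:interval} (products of maps changing Stanley depth) and Lemma~\ref{lem:1dim} (a single-variable map changes the Stanley depth by $1-n'$). You instead verify Definition~\ref{def:1} directly: you observe that $\phi^{-1}([a',b']) \cap [0,g]$ is a sublattice and hence a single interval $[c^{\min},c^{\max}]$, and then build an explicit injection from the non-saturated coordinates of $c^{\max}$ into the non-saturated coordinates of $b'$, using maximality of $c^{\max}$ to define the map and meet-preservation (applied to $c^{\max}+e_{j_1}$ and $c^{\max}+e_{j_2}$) for injectivity. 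Your argument is more self-contained — it does not depend on the two preceding lemmas — and makes explicit the stronger fact that the preimage is a \emph{single} interval, which the paper's product decomposition only yields implicitly. The paper's decomposition, on the other hand, gives structural information about join-and-meet-preserving maps that is of independent interest. One small point: you invoke monotonicity of $\phi$ without deriving it, but this is immediate from join-preservation ($a\le b$ gives $\phi(b)=\phi(a\vee b)=\phi(a)\vee\phi(b)\ge\phi(a)$) and is also needed for $\phi$ to satisfy the monotonicity hypothesis in Definition~\ref{def:1}, so it deserves a sentence.
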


\begin{proof}
    We assume that $\phi(0) = 0$, as we otherwise replace $\phi$ by $\phi - \phi(0)$ without changing the validity of the statement.
    First we show that $\phi$ is monotonic. Consider $a, b \in \NN^{n}$ with $a \leq b$. Then
    $$
    \phi(a)\le\phi(a) \vee \phi(b) = \phi(a \vee b) = \phi(b),
    $$
    hence $\phi(a) \leq \phi(b)$.

    In view of Lemma \ref{lem:interval} and Lemma \ref{lem:1dim}, it is sufficient to show that $\phi$ is a product of monotonic maps $\phi_i: \NN \rightarrow \NN^{n_i'}$ with $1\leq i \leq n$ and $\sum_i n_i' = n'$.

    For this, let $e_i$ denote the $i$-th unit vector of $\NN^n$.
    Every vector $v =\sum_i \lambda_i e_i\in \NN^n$ can be written as a sum $v = \lambda e_1 +(0, v')$ with $v'\in\NN^{n-1}$.
    As $\phi$ is monotonic, the support of $\phi(\lambda e_1)$ can only increase with $\lambda$. Hence, without loss of generality, we may assume that the maximal support of $\phi(\lambda e_1)$ for $\lambda \gg 0$ is the set of the first $n_1'$ coordinates. That is
    $$
    \phi(\lambda e_1)\in  \NN^{n_1'}\times (0,\ldots, 0).
    $$

    Since $\phi$ preserves meets, for all $\lambda\in \NN$ and $v'\in\NN^{n-1}$ we have
    \[
    0=\phi(0)=\phi(\lambda e_1\wedge (0,v'))=\phi(\lambda e_1)\wedge \phi((0,v')).
    \]
    It follows that for each vector $(0,v')$ with vanishing first coordinate, the support of its image is contained in the last $n' - n_1'$ coordinates. That is
    $$
    \phi((0,v'))\in (0,\ldots, 0)\times \NN^{n' - n_1'}.
    $$
    Then, it holds that
    $$
    \phi(v)=\phi(\lambda e_1 + (0,v')) =\phi(\lambda e_1\vee (0,v'))=\phi(\lambda e_1)\vee \phi((0,v'))=\phi(\lambda e_1) + \phi((0,v')),
    $$
    because the sum equals the join in this case.
    Set $\phi_1:\NN\rightarrow \NN^{n_1'} $, $\phi_1(\lambda)=\pi_{n_1'} \phi(\lambda e_1)$
    where $\pi_{n_1'}$ is the projection on the first $n_1'$ coordinates. Set $\phi_{n-1}:\NN^{n-1}\rightarrow \NN^{n'-n_1'} $, $\phi_{n-1}(v')=\pi_{n'-n_1'} \phi((0,v'))$
    where $\pi_{n'-n_1'}$ is the projection on the last $n'-n_1'$ coordinates.
    Then $\phi$ splits into a direct product
    \[ \phi = (\phi_1, \phi_{n-1}): \NN \times \NN^{n-1} \rightarrow \NN^{n_1'} \times \NN^{n' - n_1'}. \]
    Since $\phi_1$ is a restriction of $\phi$, it follows that  $\phi_1$ is monotonic.

    Iterating this construction yields the desired decomposition of $\phi$.
\end{proof}

\section{Application to Polarizations} \label{section:polarizations}

Let $J\subsetIJ I$ be two monomial ideals in $R$. Then we can choose $e \in \ZZ^n$ bigger than or equal to (in the sense of the order $\leq$)  the join of all generators of $I$ and $J$, and define the polarization $I^p/J^p$ of $I/J$ according to Section \ref{section:pre}.
\medskip

Polarization can also be done step by step. Let $u_1,\ldots, u_m$ be the set of minimal generators of $I$.
Following \cite{H} we define the \emph{$1$-step polarization} of $I$ (with respect to $X_i$) to be the ideal $I^1 \subset R[Y]$
generated by $v_1,\ldots, v_m$, where
\[
	v_j := \begin{cases}
		\frac{Y}{X_i}u_j &\text{ if } X_i^2 \mid u_j \\
		u_j &\text{ otherwise.}
	\end{cases}
\]

If the indeterminate of the polynomial ring with respect which we apply partial polarization is not relevant, we will omit it.

\begin{proposition}\label{Prop:polarization2}
Let $J \subsetIJ I \subset R$ be monomial ideals. Then
\begin{enumerate}
\item $\depth_{R[Y]}\ I^1/J^1=\depth_{R}\ I/J+1$;
\item $H_{I^1/J^1}(t) = \frac{1}{1-t} H_{I/J}(t)$.
\end{enumerate}
\end{proposition}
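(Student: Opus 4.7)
The plan is to reduce both statements to the single fact that $Y - X_i$ is a nonzerodivisor on the $R[Y]$-module $I^1/J^1$, with cokernel canonically isomorphic to $I/J$. Once this key identification is in hand, both claims drop out from standard homological principles.

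The first step is to recall the classical result from polarization theory (see e.g.\ Herzog--Hibi \cite{HH}): for any monomial ideal $L \subseteq R$, the element $Y - X_i$ is a nonzerodivisor on $R[Y]/L^1$, and $R[Y]/(L^1, Y - X_i) \cong R/L$. Applying this simultaneously to $L = I$ and $L = J$, I would then invoke the snake lemma on the short exact sequence
\[
0 \to I^1/J^1 \to R[Y]/J^1 \to R[Y]/I^1 \to 0
\]
with vertical maps given by multiplication by $Y - X_i$. Since this element is regular on the middle and right terms, it is also regular on the left, and the resulting six-term exact sequence collapses to
\[
0 \to (I^1/J^1)/(Y-X_i)(I^1/J^1) \to R/J \to R/I \to 0,
\]
identifying the cokernel with $I/J$ as required.

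For part (1), I would apply the standard depth formula: if $f$ is a homogeneous nonzerodivisor on a graded module $M$ contained in the maximal graded ideal, then $\depth_{R[Y]} M = \depth M/fM + 1$. Setting $M = I^1/J^1$ and $f = Y - X_i$ yields exactly statement~(1). For part (2), I note that $Y - X_i$ is homogeneous of degree $1$ in the standard grading (polarization preserves total degree), so the short exact sequence of graded modules
\[
0 \to (I^1/J^1)(-1) \xrightarrow{\ Y - X_i\ } I^1/J^1 \to I/J \to 0
\]
translates on Hilbert series into $(1-t)\,H_{I^1/J^1}(t) = H_{I/J}(t)$, which rearranges to the claimed identity.

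The only step requiring genuine work is the regularity of $Y - X_i$ on $R[Y]/L^1$, but this is truly classical; in a self-contained treatment it would be verified by writing each $f \in R[Y]$ as a polynomial in $Y - X_i$ with coefficients in $R$ and checking, using the explicit description of the generators of $L^1$, that $(Y - X_i)f \in L^1$ forces $f \in L^1$. This is the main technical obstacle I would expect; everything else is formal.
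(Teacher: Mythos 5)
Your proof is correct and takes essentially the same route as the paper's: both reduce the two claims to the single fact that $Y - X_i$ is regular on $I^1/J^1$, with quotient $I/J$. The main difference is in how this regularity is established. The paper proves it directly on $I^1/J^1$ by contradiction: it takes an associated prime $\mathfrak{p}$ of $I^1/J^1$ containing $X_n - Y$, writes $\mathfrak{p} = \Ann h$ for a monomial $h \notin J^1$, and derives a contradiction from an explicit divisibility analysis of the generators of $J^1$. You instead cite the classical regularity of $Y - X_i$ on $R[Y]/J^1$ (Herzog--Hibi) and propagate it to the submodule $I^1/J^1 \hookrightarrow R[Y]/J^1$, then use the snake lemma on $0 \to I^1/J^1 \to R[Y]/J^1 \to R[Y]/I^1 \to 0$ to identify the cokernel of multiplication by $Y - X_i$ on $I^1/J^1$ with $I/J$. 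Your version is more modular and makes the identification of the quotient (which the paper leaves implicit) completely explicit, at the cost of pushing the actual divisibility work off to the cited reference; the paper's version is more self-contained. (A small remark: for regularity alone you only need the statement for $L = J$, since $I^1/J^1$ injects into $R[Y]/J^1$; you need both $L=I$ and $L=J$ only for the cokernel identification.)
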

Here $H_{I/J}(t)$ denotes the Hilbert series with respect to the $\ZZ$-grading.
This proposition is easy and partially known (cf. \cite[p. 47]{H}). We present a complete proof for the reader's convenience.
\begin{proof}
For both claims it suffices to show that $X_n-Y$ is a regular element. By contradiction, assuming $X_n-Y$ is a zero-divisor on $I^1/J^1$, then there exists an ideal $\mathfrak{p} \in \mathrm{Ass}(I^1/J^1)$ with $X_n-Y \in \mathfrak{p}$. Since both $J^1$ and $\mathfrak{p}$ are monomial ideals, there exists a monomial $h \notin J^1$ such that $\mathfrak{p}=\Ann_{R'}h$. Then $h(X_n-Y) \in J^1$, and again $hY \in J^1$, $hX_n\in J^1$, since $J^1$ is monomial. Let $f_1,f_2 $ generators of $J^1$ such that $f_1 | hX_n$ and $f_2 | hY$. If $Y|f_2$ then $X_n | f_2$ and so $X_n | h$. Therefore $X_n^2 | f_1$, then $Y|f_1$. This implies that $Y$ divides $h$, a contradiction.
\end{proof}

One difficulty in proving our main result is that the ideals $I^1$ and $J^1$ do not arise as preimages of a map, but they are rather (generated by) the image of a map. The following Lemma is helpful for constructing Stanley decompositions in this setting.
\begin{lemma}\label{lem:sdep2}
Let $n, n' \in \NN$, $R = \KK[X_1,\ldots,X_{n}]$, $R' = \KK[X_1,\ldots,X_{n'}]$ be two polynomial rings and let $J \subsetIJ I \subset R$ be monomial ideals.
Let $\phi: \NN^{n} \rightarrow \NN^{n'}$ be a map and let $J' \subsetIJ I'  \subset R'$ be the ideals generated by $\Phi(I)$ resp. $\Phi(J)$.
Let $\Omega$ be a finite set and let $I/J = \bigoplus_{i \in \Omega} X^{a_i} \KK[Z_i]$ be a Stanley decomposition of $I/J$.
Let $Z'_i, i\in \Omega$ be a collection of subsets of $\set{X_1,\ldots,X_{n'}}$.

Assume that $\phi$ is injective, monotonic, and preserves joins.
Assume moreover, that
\[
	\Phi(X^{a_i}) \KK[Z'_i] \cap \Phi(R) = \Phi( X^{a_i} \KK[Z_i] )\quad \quad \quad (\star)
\]
for each $i\in \Omega$.

Set $V=\sum_{i \in \Omega} \Phi(X^{a_i}) \KK[Z'_i]$ as a graded vector space. Then
\[
V=\bigoplus_{i \in \Omega} \Phi(X^{a_i}) \KK[Z'_i]
\]
is a direct sum and it holds that $V \subset I' / J'$.
\end{lemma}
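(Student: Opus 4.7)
The plan is to translate both claims---the directness of the sum and the containment $V \subseteq I'/J'$---into two monomial-level statements, and then prove these by a lifting argument from $R'$ back to $R$. Each summand $V_i = \Phi(X^{a_i})\KK[Z'_i]$ is spanned by monomials of the form $X^{\phi(a_i)+e}$ with $e$ supported on $Z'_i$, and everything is finely multigraded; hence it will suffice to show (P1) every such monomial lies in $I' \setminus J'$, and (P2) no monomial lies in more than one $V_i$. The inclusion $V_i \subseteq I'$ is immediate because $\Phi(X^{a_i}) = X^{\phi(a_i)} \in I'$.

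For (P2), suppose a monomial $X^b$ lies in both $V_i$ and $V_j$, so that $b = \phi(a_i) + e_i = \phi(a_j) + e_j$ with $e_i$ supported on $Z'_i$ and $e_j$ on $Z'_j$. The key step is to consider the auxiliary element $a_i \vee a_j \in \NN^n$. Since $\phi$ preserves joins, $\phi(a_i \vee a_j) = \phi(a_i) \vee \phi(a_j) \leq b$, which coordinatewise gives $\phi(a_i \vee a_j) - \phi(a_i) \leq e_i$; therefore this difference is supported on $Z'_i$ and $\Phi(X^{a_i \vee a_j}) \in V_i$. Since $\Phi(X^{a_i \vee a_j}) \in \Phi(R)$ trivially, the hypothesis $(\star)$ places it in $\Phi(X^{a_i}\KK[Z_i])$, and injectivity of $\phi$ then lifts this to $X^{a_i \vee a_j} \in X^{a_i}\KK[Z_i]$. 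By the symmetric argument $X^{a_i \vee a_j} \in X^{a_j}\KK[Z_j]$ as well, and the assumed directness of the decomposition $I/J = \bigoplus_i X^{a_i}\KK[Z_i]$ forces $i = j$.

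For (P1) I will argue by contradiction: suppose a monomial $X^b \in V_i$ also lies in $J'$. Then $X^b$ is divisible by some $\Phi(X^h)$ with $X^h \in J$, that is, $\phi(h) \leq b$. I would then rerun the previous argument with $h$ in place of $a_j$: from $\phi(a_i \vee h) = \phi(a_i) \vee \phi(h) \leq b$ I obtain $\Phi(X^{a_i \vee h}) \in V_i \cap \Phi(R) = \Phi(X^{a_i}\KK[Z_i])$ by $(\star)$, and injectivity upgrades this to $X^{a_i \vee h} \in X^{a_i}\KK[Z_i] \subseteq I/J$, so in particular $X^{a_i \vee h} \notin J$. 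But $a_i \vee h \geq h$ and $X^h \in J$ force $X^{a_i \vee h} \in J$, a contradiction.

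I expect the main conceptual hurdle to be spotting the right auxiliary monomials---$X^{a_i \vee a_j}$ in (P2) and $X^{a_i \vee h}$ in (P1)---that bring the two extra hypotheses on $\phi$ (injectivity and join-preservation) into play together with the bidirectional condition $(\star)$. Once those are identified, each argument simply transports a would-be collision in $R'$ to a genuine collision inside the originally given direct-sum Stanley decomposition in $R$, from which both conclusions follow by inspection.
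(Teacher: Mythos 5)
Your proposal is correct and follows essentially the same route as the paper's own proof. Both arguments hinge on the same observations: the join $a_i \vee a_j$ (resp.\ $a_i \vee h$ with $X^h$ a generator of $J$) corresponds to the least common multiple, the join-preservation of $\phi$ sends it to the least common multiple of the $\Phi$-images, the coordinatewise bound shows this LCM still lies in the relevant Stanley space $\Phi(X^{a_i})\KK[Z'_i]$, and then condition $(\star)$ together with injectivity of $\phi$ transports the collision back into $R$, where it contradicts the directness of the given decomposition of $I/J$ (resp.\ the fact that $X^{a_i}\KK[Z_i]\subseteq I/J$ avoids $J$); your version merely spells out the coordinate-level manipulations that the paper leaves implicit.
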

\begin{proof}
First, we show that the sum is direct.
On the contrary, suppose that there are indices $i \neq j$ such that a monomial appears in both parts. This monomial is then a common multiple of $\Phi(X^{a_i})$ and $\Phi(X^{a_j})$.
Then also the least common multiple of $\Phi(X^{a_i})$ and $\Phi(X^{a_j})$ appears in both parts.
As $\Phi$ preserves the least common multiple of two monomials, it follows that the least common multiple of $\Phi(X^{a_i})$ and $\Phi(X^{a_j})$ is $\Phi(X^{a_i\vee a_j})$ (where $X^{a_i\vee a_j}$ is the least common multiple of $X^{a_i}$ and $X^{a_j}$). But now, the condition $(\star)$ and the fact that $\phi$ is injective imply that $X^{a_i\vee a_j} \in X^{a_i} \KK[Z_i] \cap X^{a_j} \KK[Z_j]$, a contradiction.

Next, we show that $V \subset I'/J'$.
Every $X^{a_i}$ is contained in $I$, hence $V$ is contained in $I'$.
It remains to show that no monomial in $\Phi(X^{a_i}) \KK[Z'_i]$ is contained in $J'$.
Assume on the contrary that such a monomial exists.
Then it is a common multiple of $\Phi(X^{a_i})$ and $\Phi(X^a)$ for a minimal generator of $X^a$ of $J$.
Again, it follows that the least common multiple of $\Phi(X^{a_i})$ and $\Phi(X^a)$ is contained in $\Phi(X^{a_i}) \KK[Z'_i] \cap J' \cap \Phi(R)$.
But then the least common multiple of $X^{a_i}$ and $X^a$ is contained in $X^{a_i} \KK[Z_i] \cap J$, a contradiction.
\end{proof}

\begin{theorem}\label{Theorem:main}
	Let $J \subsetIJ I \subset R$ be two monomial ideals, and let $J^1 \subsetIJ I^1 \subset R[Y]$ be their $1$-step polarizations. Then
	\[
	\sdepth\ I/J = \sdepth\ I^1/J^1 - 1.
	\]
\end{theorem}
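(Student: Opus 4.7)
The plan is to prove the two inequalities $\sdepth\ I/J \geq \sdepth\ I^1/J^1 - 1$ and $\sdepth\ I^1/J^1 \geq \sdepth\ I/J + 1$ separately, using the tools developed in Sections~\ref{section:posetmaps} and~\ref{section:polarizations}. Let $X_n$ denote the variable being polarized. The cases $g_n \leq 1$ are trivial (no generator has $X_n^2$, so $I^1 = I$ inside $R[Y]$, and appending a free variable adds $1$ to the Stanley depth), so I focus on $g_n \geq 2$. Define the exponent-level polarization
\[
\phi \colon \NN \to \NN^2, \quad \phi(0) = (0,0),\ \phi(1) = (1,0),\ \phi(k) = (k-1, 1) \text{ for } k \geq 2,
\]
and let $\hat\phi \colon \NN^n \to \NN^{n+1}$ be its padding with identities, as in \Cref{rem:id}.

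For the first inequality I would apply \Cref{prop:sdep}. Splitting the minimal generators of $I$ and $J$ into those divisible by $X_n^2$ (the polarized ones, with $v_j = \frac{Y}{X_n} u_j$) and those not (the unpolarized ones, with $v_j = u_j$), and inspecting the divisibility $v_j \mid X^{\hat\phi(a)}$ case-by-case on whether $a_n \leq 1$ or $a_n \geq 2$, one checks that $\hat\Phi^{-1}(I^1) = I$ and $\hat\Phi^{-1}(J^1) = J$. \Cref{lem:1dim} then yields that $\phi$ changes the Stanley depth by $1 - 2 = -1$ with respect to $g_n$ and $\phi(g_n) = (g_n-1, 1)$; combined with \Cref{lem:interval}, this promotes to $\hat\phi$ with respect to $g$ and $g^1 := (g_1, \ldots, g_{n-1}, g_n-1, 1)$. \Cref{prop:sdep} then delivers the inequality.

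For the converse inequality I would apply \Cref{lem:sdep2} to $\hat\phi$, which is injective, monotonic, and preserves joins since $\NN$ is totally ordered. Starting from an optimal Stanley decomposition $I/J = \bigoplus_i X^{a_i}\KK[Z_i]$, I would set
\[
Z'_i := \begin{cases} Z_i \cup \set{X_n} & \text{if } X_n \notin Z_i \text{ and } a_{i,n}=1,\\ Z_i \cup \set{Y} & \text{otherwise,}\end{cases}
\]
so that $\#Z'_i = \#Z_i + 1$ for every $i$. Condition $(\star)$ of \Cref{lem:sdep2} is verified by a short case analysis on whether $X_n \in Z_i$ and on the value of $a_{i,n}$, intersecting the $(X_n, Y)$-support of $\hat\Phi(X^{a_i})\KK[Z'_i]$ with the explicit image $\set{(0,0),(1,0)} \cup \set{(c,1) : c \geq 1}$ of $\phi$. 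The lemma then gives a direct sum $V = \bigoplus_i \hat\Phi(X^{a_i})\KK[Z'_i] \subseteq I^1/J^1$. Since $\hat\phi$ preserves total degree, $|\hat\phi(a_i)| = |a_i|$, and comparing total-degree Hilbert series via \Cref{Prop:polarization2}(2) yields
\[
H_V(t) = \sum_i \frac{t^{|a_i|}}{(1-t)^{\#Z_i+1}} = \frac{1}{1-t} H_{I/J}(t) = H_{I^1/J^1}(t),
\]
forcing $V = I^1/J^1$. Thus $V$ is a Stanley decomposition of $I^1/J^1$ of depth $\sdepth\ I/J + 1$.

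The main obstacle is the exceptional case $X_n \notin Z_i$ with $a_{i,n} = 1$. The naive choice $Z'_i = Z_i \cup \set{Y}$ fails $(\star)$ there, because the extra monomial $\hat\Phi(X^{a_i}) \cdot Y$ coincides with $\hat\Phi(X^{a_i} X_n)$ and hence lies in $\hat\Phi(R)$, yet $X^{a_i} X_n$ does not belong to $X^{a_i} \KK[Z_i]$ (since $X_n \notin Z_i$). Adding $X_n$ instead of $Y$ to $Z_i$ repairs the condition: the resulting Stanley space contains only monomials $X^{a_i} X_n^k$ with $k \geq 0$ and no $Y$, whose exponent pairs $(1+k, 0)$ lie in the image of $\phi$ only for $k = 0$, matching $\hat\Phi(X^{a_i} \KK[Z_i])$ exactly.
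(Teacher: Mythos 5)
Your proposal is correct and takes essentially the same approach as the paper: the same exponent-level map $\phi$, the same use of \Cref{lem:1dim}, \Cref{lem:interval}, and \Cref{prop:sdep} for one inequality, and the same use of \Cref{lem:sdep2} plus the Hilbert-series computation via \Cref{Prop:polarization2} for the other. Your case-split definition of $Z'_i$ is an unpacked but exactly equivalent form of the paper's $Z_i \cup \bigl(\set{X_n,Y}\setminus\set{\hat\Phi(X_nX^{a_i})/\hat\Phi(X^{a_i})}\bigr)$ in the case $X_n\notin Z_i$.
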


\begin{proof}
	Without loss of generality, we assume that $X^2_n$ divides one of the generators of $I$ or $J$ and we apply polarization with respect to $X_n$.
	Consider the map $\phi: \NN \rightarrow \NN^2$ defined by
	\[
		\phi(i) := \begin{cases}
			(i-1,1) &\text{ if } i \geq 2\\
			(i,0) &\text{ if } i =0,1.
		\end{cases}
	\]
\begin{figure}[h]
\begin{tikzpicture}[scale=0.7, radius=0.14]
\newcommand{\Anzx}{5}
\newcommand{\Anzy}{3}
	\draw[gray!50] ($(-0.5,-0.5)$) grid ($(\Anzx+0.5,\Anzy+0.5)$);
	\draw[->] (-1,0) -- ($(\Anzx+1,0)$);
	\draw[->] (0,-1) -- (0,4);

	\foreach \x in {0,1}
		\fill (\x,0) circle;

	\foreach \x in {1,..., \Anzx}
		\fill (\x,1) circle;

\end{tikzpicture}
\caption{The image of $\phi$ from the proof of Theorem \ref{Theorem:main}.}
\end{figure}
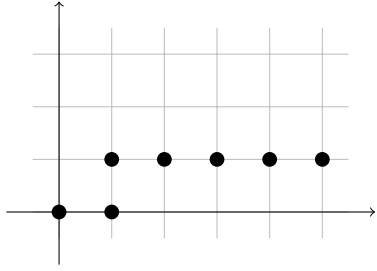

	It is easy to see that $\phi$ is injective, monotonic and preserves joins.
	Further, observe that $\phi(a) \leq \phi(b)$ if and only if $a \leq b$ for $a,b \in \NN$.
	Moreover, all these properties carry over to $\hat{\phi}:\NN^{n}\rightarrow\NN^{n+1}$ and $\hat{\Phi}:R\rightarrow R[Y]$ (see also Remark \ref{rem:id}).
	
	By definition, it holds that $I^1$ is generated by $\hat{\Phi}(I)$, hence $\hat{\Phi}^{-1}(I^1) \supseteq I$.
	We claim that $\hat{\Phi}^{-1}(I^1) = I$.
	To see this, consider a monomial $X^b\in\hat{\Phi}^{-1}(I^1)$.
	Then $\hat{\Phi}(X^b)$ is a multiple of a generator $\Phi(X^a)$ of $I^1$, where $X^a \in I$. By the observation above, $X^b$ is then a multiple of $X^a$ and thus contained in $I$.
	Analogously it holds that $\hat{\Phi}^{-1}(J^1) = J$.
	
	Let $g=(g_1, \ldots, g_n)$ be the join of the exponents of the monomial generators of $I$ and $J$ (we assume $g_n\ge 2$).
	Then $g' = (g_1,\ldots,g_{n-1},g_n - 1, 1) = \hat{\phi}(g)$ is the join of the exponents of the monomial generators of $I^1$ and $J^1$.
	It follows from Lemma \ref{lem:1dim} that $\hat{\phi}$ changes the Stanley depth by $-1$ with respect to $g_n$ and $(g_n-1,1)$.
	Hence by Lemma \ref{lem:interval} and Proposition \ref{prop:sdep} it holds that $\sdepth\ I/J \geq \sdepth\ I^1/J^1 - 1$.

\newcommand{\hphi}[1]{\hat{\Phi}(#1)}
\newcommand{\nt}{\tilde{N}}
\newcommand{\mt}{\tilde{M}}
	Now we turn to the second inequality.
	Let us consider a Stanley decomposition
	$I/J = \bigoplus_{i} X^{a_i} \KK[Z_i]$
	of $I/J$.
	Remark that $\hphi{X_n X^a}/ \hphi{X^a} \in \set{X_n, Y}$ for every monomial $X^a \in R$, which allows us to define
	\[  Z'_i := \begin{cases}
		Z_i \cup \set{Y} & \text{~if~}~ X_n \in Z_i\\
		Z_i \cup \left(\set{X_n, Y} \setminus \set{\frac{\hphi{X_n X^{a_i}}}{\hphi{X^{a_i}}}}\right)& \text{~otherwise.}
	\end{cases} \]

	We claim that
	\[
	V := \bigoplus_{i} \hat{\Phi}(X^{a_i}) \KK[Z'_i] \quad \quad \quad (\star\star)
	\]
	is a Stanley decomposition of $I^1/J^1$.
	First, we show that
	\begin{equation*} 
		\hat{\Phi}(X^{a_i}) \KK[Z'_i] \cap \hat{\Phi}(R) = \hat{\Phi}( X^{a_i} \KK[Z_i])
	\end{equation*}
	for each $i$. The inclusion ``$\supseteq$'' is clear.

	For the other inclusion, let $M':= \hat{\Phi}(X^{a_i}) N'$ be a monomial in the left-hand side and let $M := \hat{\Phi}^{-1}(M') \in R$ be its preimage. Note that $\hphi{X^{a_i}} \mid M'$ implies that $X^{a_i} \mid M$ (it follows from the observation above that  $\phi(a) \leq \phi(b)$ if and only if $a \leq b$ for $a,b \in \NN$), hence we may define $N := M / X^{a_i}$. It suffices to show that $N \in \KK[Z_i]$, because then $M' = \hphi{X^{a_i} N}$ is contained in the right-hand side.
	
	By definition, the map $\hat{\Phi}$ may change only the exponents of $X_n$ and $Y$. Therefore the equality $\hphi{X^{a_i}N} = \hphi{X^{a_i}}N'$ implies that $N'$ and $N$ may differ only by multiplication and/or division by (powers of) $X_n$ and $Y$.
	Hence every other variable appearing in $N$ also appears in $N' \in \KK[Z_i']$, so it is contained in $Z_i'$ and (thus) in $Z_i$.
	Furthermore, $N \in R$ implies that $Y \nmid N$. So we only need to prove the following: If $X_n \mid N$, then $X_n \in Z_i$.

	So assume that $X_n \mid N$. Then $X_n X^{a_i}\mid N X^{a_i}$ and thus $\hphi{X_n X^{a_i}}\mid \hphi{N X^{a_i}} = M'$.
This further implies that
\[
\frac{\hphi{X_n X^{a_i}}}{\hphi{X^{a_i}}} \ \vrule\  \frac{M'}{\hphi{X^{a_i}}} = N'. 
\]
	As $N' \in \KK[Z_i']$, it now follows from the definition of $Z_i'$ that $X_n \in Z_i$.

	It is easy to see that $\hat{\Phi}$ satisfies the assumptions of Lemma \ref{lem:sdep2}, so we conclude that the sum in $ (\star \star) $ is direct and that $V \subset I^1 / J^1$. It remains to show that $V = I^1/J^1$.
	For this, we compute the graded Hilbert series of $V$:
	\begin{align*}
	 H_V(t) &= \frac{\sum_i t^{|a_i|} (1-t)^{n+1-|Z'_i|}}{(1-t)^{n+1}} \\
	 	&= \frac{\sum_i t^{|a_i|} (1-t)^{n-|Z_i|}}{(1-t)^{n+1}} \\
	 	&= \frac{1}{1-t} H_{I/J}(t) = H_{I^1/J^1}(t).
	\end{align*}
	Here, $|a_i|$ denotes the sum of the components of $a_i$.
	In the first and third equality we used the decompositions of $V$ resp. $I/J$ given above.
	For the last equality we used Proposition \ref{Prop:polarization2}.
	As we have already shown that $V \subset I^1/J^1$, the claim follows.
	
	We conclude that the sum in $ (\star \star) $ is a Stanley decomposition of $I^1/J^1$. As $|Z'_i| = |Z_i| + 1$ for each $i$, it follows that $\sdepth\ I^1/J^1 \geq \sdepth\ I/J + 1$.
\end{proof}

\noindent Iteration of Theorem \ref{Theorem:main} and Proposition \ref{Prop:polarization2} has one immediate consequence:
\begin{corollary}\label{cor:conj}
Let $J \subsetIJ I \subset R$ be monomial ideals, and let $I^p,J^p \subset R^p$ be their polarizations.
Then
\[
\sdepth\ I/J - \depth\ I/J = \sdepth\ I^p/J^p - \depth\ I^p/J^p.
\]
In particular, \cite[Conjecture 62]{H} is true.
\end{corollary}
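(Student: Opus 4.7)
The strategy is to iterate the two preceding results: Theorem \ref{Theorem:main} controls the change of Stanley depth under a single 1-step polarization, while Proposition \ref{Prop:polarization2}(1) controls the change of ordinary depth. Both quantities increase by exactly one, so their difference is invariant under a 1-step polarization. The plan is therefore to connect $(I,J)$ to $(I^p,J^p)$ by a finite chain of 1-step polarizations and to telescope the resulting identities.

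Concretely, unwinding the definition of $I^p$ from Section \ref{section:pre}, the polarization of $I/J$ is obtained as follows: for every variable $X_j$ that appears with maximal exponent $a_j \geq 2$ in the minimal generators of $I$ or $J$, perform the 1-step polarization with respect to $X_j$ a total of $a_j - 1$ times, each time introducing a fresh indeterminate. This yields a chain
\[
	(R,I,J) = (R_0, I_0, J_0),\ (R_1, I_1, J_1),\ \dotsc,\ (R_N, I_N, J_N) = (R^p, I^p, J^p),
\]
in which each $(R_{k+1}, I_{k+1}, J_{k+1})$ is the 1-step polarization of $(R_k, I_k, J_k)$ with respect to some variable still occurring with exponent $\geq 2$. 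The strict inclusion $J_k \subsetIJ I_k$ is preserved throughout, since 1-step polarization sends distinct monomial generators to distinct monomial generators, and the iteration terminates precisely when the ideals become squarefree.

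Applying Theorem \ref{Theorem:main} at each step gives $\sdepth I_{k+1}/J_{k+1} = \sdepth I_k/J_k + 1$, while Proposition \ref{Prop:polarization2}(1) gives $\depth I_{k+1}/J_{k+1} = \depth I_k/J_k + 1$. Subtracting these identities and summing telescopically over $k = 0, 1, \dotsc, N-1$ yields
\[
	\sdepth I^p/J^p - \depth I^p/J^p = \sdepth I/J - \depth I/J,
\]
which is the claimed equality. Conjecture \ref{conj:HH} is the special case obtained by applying the statement to the pair $(R, I)$ in place of $(I, J)$, so that $I/J = R/I$. There is no genuine obstacle beyond bookkeeping along the chain, since all the substantive work has already been carried out in Theorem \ref{Theorem:main}; the only verification needed is that the hypothesis $J_k \subsetIJ I_k$ persists at every intermediate step, which is immediate from the construction.
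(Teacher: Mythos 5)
Your proof is correct and matches the paper's approach exactly: the paper proves this corollary by the one-line remark that it is an immediate consequence of iterating Theorem \ref{Theorem:main} (which shifts $\sdepth$ by $1$ under a $1$-step polarization) together with Proposition \ref{Prop:polarization2}(1) (which shifts $\depth$ by $1$), and your proposal simply spells out this iteration and the telescoping in full detail.
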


Note that the preceding corollary effectively reduces Stanley's conjecture to the squarefree case:
\begin{corollary}\label{cor:polarization}
	Let $J \subsetIJ I \subset R$ be monomial ideals, and let $I^p,J^p \subset R^p$ be their polarizations.
	Then $I/J$ satisfies the Stanley Conjecture if and only if $I^p/J^p$ satisfies it too.
\end{corollary}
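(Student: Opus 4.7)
The plan is to observe that this corollary is a direct logical consequence of the identity established in Corollary \ref{cor:conj}, so essentially no new work is required beyond unwinding definitions.

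The Stanley Conjecture for a multigraded module $M$ is the single inequality $\sdepth\ M \geq \depth\ M$, which is equivalent to the non-negativity of the difference $\sdepth\ M - \depth\ M$. First I would simply apply this reformulation to both $M = I/J$ and $M = I^p/J^p$: the module $I/J$ satisfies the Stanley Conjecture if and only if $\sdepth\ I/J - \depth\ I/J \geq 0$, and analogously $I^p/J^p$ satisfies it if and only if $\sdepth\ I^p/J^p - \depth\ I^p/J^p \geq 0$.

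Next I would invoke Corollary \ref{cor:conj}, which asserts precisely the equality
\[
\sdepth\ I/J - \depth\ I/J = \sdepth\ I^p/J^p - \depth\ I^p/J^p.
\]
Since the two quantities whose non-negativity govern the respective cases of the Stanley Conjecture are in fact equal, their non-negativity is clearly equivalent, yielding the desired biconditional.

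There is no real obstacle here: all the substantive work has already been done in Theorem \ref{Theorem:main} and Proposition \ref{Prop:polarization2}, whose combined iteration produces Corollary \ref{cor:conj}. The only thing worth emphasizing in the write-up is the passage from a one-step polarization (where both $\sdepth$ and $\depth$ increase by exactly $1$, by Theorem \ref{Theorem:main} and Proposition \ref{Prop:polarization2} respectively) to the full polarization by induction on the number of polarization steps, which is what underlies Corollary \ref{cor:conj} and hence the present statement.
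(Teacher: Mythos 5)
Your proposal is correct and matches the paper's approach exactly: the paper gives no proof for this corollary because, as you observe, it follows immediately from Corollary~\ref{cor:conj} by rewriting the Stanley Conjecture as the non-negativity of $\sdepth - \depth$.
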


\begin{remark}
\begin{enumerate}
	\item We would like to point out that the ``only if''-part of Corollary \ref{cor:polarization} already appeared in \cite[Theorem 3.5]{A} in the quite particular case $J=(0)$ and $R/I$ Cohen-Macaulay. We obtain in contrast a full reduction of the Stanley's conjecture to the squarefree case.
	\item An invariant related to the Stanley depth is the \emph{Hilbert depth}, which was introduced by Uliczka \cite{U} in the standard $\ZZ$-graded case and by Bruns, Krattenthaler and Uliczka \cite{BKU} in the multigraded case.
	Theorem \ref{Theorem:main} and \Cref{cor:conj} hold \emph{mutatis mutandis} for the Hilbert depth:
	since the multigraded Hilbert depth coincides with the Stanley depth in our situation (cf. \cite[Proposition 2.8]{BKU}), we have
	\[
		\hdepth_{R[Y]}\ I^1/J^1=\hdepth_R\ I/J+1.
	\]
	In the $\ZZ$-graded case, the equality
	\[
		\hdepth_{R[Y]}^1\ I^1/J^1=\hdepth_R^1\ I/J+1
	\]
	is easily deduced from \cite[Theorem 3.2]{U}.
\end{enumerate}
\end{remark}

It was previously known (see \cite{HJY}) that the Stanley conjecture for algebras of the type $R/I$ (where $I\subset R$ is a monomial ideal) implies the conjecture (also due to Stanley \cite[Conjecture 2.7]{St2}) that every Cohen-Macaulay simplicial complex is partitionable. We show that they are in fact equivalent:

\begin{corollary}\label{cor:equ}
Let $I\subset R$ be a monomial ideal. Stanley's conjecture for algebras of the type $R/I$ is equivalent to Stanley's conjecture that every Cohen-Macaulay simplicial complex is partitionable.
\end{corollary}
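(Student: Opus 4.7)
The forward implication was established in \cite{HJY}, so the remaining direction to prove is: assuming every Cohen-Macaulay simplicial complex is partitionable, the Stanley conjecture holds for $R/I$ for every monomial ideal $I \subset R$. The plan consists of two reductions: first, use our polarization result to reduce to the squarefree case, and second, use a classical skeleton theorem to reduce to a Cohen-Macaulay simplicial complex.

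For the first step, I would apply \Cref{cor:polarization} with $J = (0)$: the Stanley conjecture for $R/I$ is equivalent to the Stanley conjecture for $R^p/I^p$. Since $I^p$ is squarefree, we may write $R^p/I^p = \KK[\Delta]$ for a uniquely determined simplicial complex $\Delta$ on vertex set $[n']$. Setting $d := \depth\ \KK[\Delta]$, the problem reduces to producing a Stanley decomposition of $\KK[\Delta]$ of Stanley depth at least $d$.

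For the second step, I would invoke the classical theorem (due to Smith; see also work of Hibi and Munkres) that the $(d-1)$-skeleton $\Delta^{(d-1)}$ is Cohen-Macaulay of pure dimension $d-1$. Under the partitionability hypothesis, $\Delta^{(d-1)}$ then admits a partition of its face poset $\Delta^{(d-1)} = \bigsqcup_i [F_i, G_i]$ into intervals, where each $G_i$ is a facet of $\Delta^{(d-1)}$, so $|G_i| = d$.

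Finally, I would extend this to a partition of the face poset of $\Delta$ itself by appending the singleton intervals $[F, F]$ for each face $F \in \Delta$ with $|F| > d$; these cover precisely the faces missing from $\Delta^{(d-1)}$. Invoking \Cref{theo:HVZ}(a) in the squarefree case $g = (1, \ldots, 1)$, the extended partition yields a Stanley decomposition of $\KK[\Delta]$ whose Stanley depth equals $\min(d, d+1) = d$, matching $\depth\ \KK[\Delta]$. The sole nonroutine ingredient is the classical skeleton theorem; everything else amounts to bookkeeping via the HVZ correspondence.
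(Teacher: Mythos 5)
Your proof is correct in substance but takes a genuinely different route from the paper's. The paper's own proof is essentially a two-line citation: it invokes \cite[Corollary~3.2]{HJZ} (Stanley's conjecture holds for all algebras $R/I$ if and only if it holds for all Cohen--Macaulay ones) and combines this with \Cref{cor:polarization}. You instead re-derive the Cohen--Macaulay reduction by hand in the squarefree setting: after polarizing, you apply the Smith/Munkres skeleton theorem to conclude that $\Delta^{(d-1)}$ is Cohen--Macaulay of pure dimension $d-1$ where $d=\depth \KK[\Delta]$, partition it using the hypothesis, and then extend to a partition of the full face poset of $\Delta$ by adjoining the singleton intervals $[F,F]$ for faces with $|F|>d$; since those contribute $\rho(F)=|F|\geq d+1$ while the facets of the skeleton contribute exactly $d$, \Cref{theo:HVZ}(a) yields $\sdepth \KK[\Delta]\geq d$. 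This is precisely the argument underlying \cite[Corollary~3.2]{HJZ} in the squarefree case, so your approach is more self-contained and makes explicit where the partitionability hypothesis enters, at the price of unpacking a cited result rather than using it as a black box. One notational slip to correct: to apply \Cref{cor:polarization} to the algebra $R/I$, you should set, in the notation of that corollary, $I:=R$ and $J:=I$; taking ``$J=(0)$'' would give the module $I/(0)\cong I$, not $R/I$. With that fixed, the rest of the argument (including the ``$\min(d,d+1)=d$'' bookkeeping, which should more precisely read ``the minimum of $\rho$ over all tops of intervals is $d$'') is sound.
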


\begin{proof} It is known that Stanley's conjecture holds for all algebras of the type $R/I$ if and only if it holds for all Cohen-Macaulay such algebras, cf. \cite[Corollary 3.2]{HJZ}. As it was pointed out in \cite{H}, this together with Corollary \ref{cor:polarization} completes the proof.
\end{proof}

\begin{corollary}
	Stanley's conjecture holds for quotients $R/I$ of Gorenstein monomial ideals $I \subset R$ with at most $8$ generators.
\end{corollary}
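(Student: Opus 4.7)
The plan is to reduce to the squarefree case via polarization and then appeal to a combinatorial verification for Cohen--Macaulay complexes with few minimal non-faces.

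First, I would apply \Cref{cor:polarization} with $J=(0)$, which reduces the Stanley conjecture for $R/I$ to the analogous statement for the polarization $R^p/I^p$. A direct inspection of the polarization recipe of Section~\ref{section:pre} shows that the assignment $u_i \mapsto v_i$ on the minimal generators is injective and respects divisibility: one verifies $v_i \mid v_j$ if and only if $u_i \mid u_j$. Hence minimal generators of $I$ correspond bijectively to minimal generators of $I^p$, so $I^p$ still has at most $8$ minimal generators. Iterating \Cref{Prop:polarization2} shows further that $R^p/I^p$ arises from $R/I$ by introducing new variables and then dividing out a regular sequence of linear forms; in particular, polarization preserves the Cohen--Macaulay property. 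Since $R/I$ is Gorenstein---and therefore Cohen--Macaulay---the ring $R^p/I^p$ is Cohen--Macaulay as well.

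Second, I would translate the problem into the language of simplicial complexes. Writing $I^p = I_\Delta$ for the Stanley--Reisner ideal of a simplicial complex $\Delta$, the complex $\Delta$ is Cohen--Macaulay and its minimal non-faces correspond bijectively to the minimal generators of $I^p$, so $\Delta$ has at most $8$ minimal non-faces. Applying \Cref{theo:HVZ} with $g=(1,\ldots,1)$---so that the characteristic poset is the face poset of $\Delta$---the Stanley conjecture for $R^p/I_\Delta$ becomes the statement that $\Delta$ is partitionable.

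It therefore suffices to prove the purely combinatorial claim that every Cohen--Macaulay simplicial complex with at most $8$ minimal non-faces is partitionable. Only finitely many combinatorial types arise, so this is a finite verification, and it can be settled either by an explicit classification of such complexes or by direct construction of partitions, building on earlier work on complexes with few minimal non-faces. Piecing the three steps together then yields the corollary. The main obstacle is precisely this final, combinatorial step: while finiteness renders the problem tractable in principle, producing either the classification or the explicit partitions for all Cohen--Macaulay complexes with up to $8$ minimal non-faces is the technical core of the argument.
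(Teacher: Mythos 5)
Your reduction to the squarefree case via \Cref{cor:polarization}, and the observation that polarization preserves the number of minimal generators, match the first step of the paper's argument. But after that your proof has a genuine gap, and in fact you discard the very hypothesis that makes the result tractable.

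You weaken Gorenstein to Cohen--Macaulay and then assert that it ``suffices to prove the purely combinatorial claim that every Cohen--Macaulay simplicial complex with at most $8$ minimal non-faces is partitionable,'' which you explicitly leave unfinished. This is not a minor technicality to be filled in: it is the entire content of the corollary, and for merely Cohen--Macaulay complexes the statement is neither obviously finite to check (bounding the minimal non-faces does not by itself bound the number of vertices without first invoking something like \cite[Prop.~5.1]{IJ} to strip superfluous variables) nor is it settled by any citation you offer. The paper instead keeps the Gorenstein hypothesis active all the way through: after polarizing (which preserves Gorensteinness) and removing variables not appearing in any generator, $I$ is the Stanley--Reisner ideal of a homology sphere with at most $8$ minimal non-faces. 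The paper then appeals to the classification of such homology spheres in \cite{K}, which shows they are all polytopal and hence shellable, and the Stanley conjecture is already known for shellable complexes by \cite{HP}. So the decisive step is not a brute-force verification of partitionability for CM complexes, but the passage Gorenstein $\Rightarrow$ homology sphere $\Rightarrow$ (by classification) polytopal $\Rightarrow$ shellable $\Rightarrow$ Stanley conjecture. Without this chain, your argument does not close.
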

\begin{proof}
	Polarization preserves the Gorenstein property and the number of generators, so by Corollary \ref{cor:polarization} we may assume that $I$ is squarefree.
	Moreover, by \cite[Prop. 5.1]{IJ} we may assume that every variable of $R$ appears in a generator of $I$. So $I$ is a Stanley-Reisner ideal of a homology sphere. But these homology spheres have been classified in \cite{K}. In particular, they are all polytopal and thus shellable. Since the Stanley conjecture is known to hold in this case by \cite{HP} or \cite[p.~15]{H}, the claim follows.
\end{proof}

\section{Further Applications to Stanley decompositions} \label{section:furtherapplications}

Let $I, J \subset R$ be monomial ideals with $J \subsetIJ I$. First of all, the techniques introduced in Section \ref{section:posetmaps}~ allows us to generalize results of
Cimpoea\c{s} \cite[Lemma 1.1]{Ci2}, and Ishaq and Qureshi \cite[Lemma 2.1]{MM} :
\begin{proposition}\label{prop:unsus1}
	Let $k\in\NN$.
	Let $I'$ and $J'$ be the monomial ideals obtained from $I$ and $J$ in the following way:
	Each generator whose degree in $X_n$ is at least $k$ is multiplied by $X_n$, and all other generators are taken unchanged.
	Then
	\[
	\sdepth\ I/J = \sdepth\ I' / J'.
	\]
\end{proposition}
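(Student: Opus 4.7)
The plan is to apply the framework of Section \ref{section:posetmaps} by constructing two monotonic maps, one in each direction, each of which changes the Stanley depth by zero with respect to suitably chosen $g$-vectors. Two applications of Proposition \ref{prop:sdep} will then yield both inequalities $\sdepth\ I/J \geq \sdepth\ I'/J'$ and $\sdepth\ I'/J' \geq \sdepth\ I/J$. We may assume without loss of generality that some generator of $I$ or $J$ has $X_n$-degree at least $k$, since otherwise $I = I'$ and $J = J'$. Fix $g = (g_1, \dotsc, g_n)$ with $g_n$ at least the maximum $X_n$-degree of any generator of $I$ or $J$, so in particular $g_n \geq k$, and set $g' = (g_1, \dotsc, g_{n-1}, g_n + 1)$; this vector bounds the generators of $I'$ and $J'$.

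For the inequality $\sdepth\ I/J \geq \sdepth\ I'/J'$, I define $\phi \colon \NN \to \NN$ by $\phi(i) = i$ for $i < k$ and $\phi(i) = i + 1$ for $i \geq k$, and let $\hat{\phi} \colon \NN^n \to \NN^n$ be the padded version from Remark \ref{rem:id}. By construction the generators of $I'$ (respectively $J'$) are precisely the $\hat{\Phi}$-images of those of $I$ (respectively $J$), and since $\phi$ satisfies $\phi(a) \leq \phi(b) \iff a \leq b$, a case distinction on whether the $X_n$-exponent of a monomial is above or below $k$ shows that $\hat{\Phi}^{-1}(I') = I$ and $\hat{\Phi}^{-1}(J') = J$. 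By Lemma \ref{lem:1dim} the map $\phi$ changes the Stanley depth by $0 = 1 - 1$ with respect to $g_n$ and $\phi(g_n) = g_n + 1$; Lemma \ref{lem:interval} extends this to $\hat{\phi}$ with respect to $g$ and $g'$, and Proposition \ref{prop:sdep} yields the first inequality.

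For the reverse inequality I use the dual map $\psi \colon \NN \to \NN$ defined by $\psi(i) = i$ for $i < k$ and $\psi(i) = i - 1$ for $i \geq k$, which is still monotonic (though no longer injective). Let $\hat{\psi} \colon \NN^n \to \NN^n$ denote its padded version. The crucial step is verifying $\hat{\Psi}^{-1}(I) = I'$ and $\hat{\Psi}^{-1}(J) = J'$: for $b_n < k$ we have $\hat{\psi}(b) = b$, and one checks $X^b \in I \iff X^b \in I'$ because any generator of $I$ dividing $X^b$ has $X_n$-degree below $k$ and is therefore also a generator of $I'$; for $b_n \geq k$ we have $\hat{\psi}(b) = b - e_n$, and the correspondence between a generator $u$ of $I$ and the associated generator ($u$ itself or $X_n u$) of $I'$ translates $X^{b-e_n} \in I$ exactly into $X^b \in I'$. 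The same reasoning applies with $J$ in place of $I$. Since $g_n \geq k$ we have $\psi(g_n + 1) = g_n$, so Lemma \ref{lem:1dim} shows that $\psi$ changes the Stanley depth by $0$ with respect to $g_n + 1$ and $g_n$; Lemma \ref{lem:interval} extends this to $\hat{\psi}$ with respect to $g'$ and $g$, and a second application of Proposition \ref{prop:sdep} gives the reverse inequality.

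The main obstacle is the combinatorial verification of the two preimage identities $\hat{\Phi}^{-1}(I') = I$ and $\hat{\Psi}^{-1}(I) = I'$ together with their $J$-counterparts. Each reduces to a short case analysis tracking whether the $X_n$-degree of a monomial (or generator) is above or below $k$, and whether one is looking at a generator of the original or the modified ideal. The verification is not deep, but it must be carried out carefully, since the modification of generators near the threshold $k$ is precisely the feature that makes the Stanley depth invariant under this transformation.
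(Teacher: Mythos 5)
Your proof is correct and follows the paper's approach exactly: two one-variable monotonic maps, one inserting a gap at the threshold $k$ and one collapsing it, shown via Lemma~\ref{lem:1dim} and Lemma~\ref{lem:interval} to change the Stanley depth by $0$, and verified to satisfy $\hat{\Phi}^{-1}(I')=I$ and $\hat{\Psi}^{-1}(I)=I'$ so that Proposition~\ref{prop:sdep} gives both inequalities. One notable point: your $\psi$ (with $\psi(k)=k-1$) is actually the correct choice, whereas the paper's displayed $\psi$ (with $\psi(k)=k$, $\psi(i)=i-1$ only for $i>k$) fails the required identity $\hat{\Psi}^{-1}(I)=I'$ whenever $I$ or $J$ has a minimal generator of $X_n$-degree exactly $k$ --- for example $I=(X_n^k)$ gives $\hat{\Psi}^{-1}(I)=(X_n^k)\neq(X_n^{k+1})=I'$ --- so you have silently repaired a small typo in the published argument.
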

\begin{proof}
	Consider the maps $\phi, \psi: \NN \rightarrow \NN$ defined by
	\[
	\phi(i) := \left\{\begin{aligned}
		&i & \text{ if } i < k, \\
		&i+1 & \text{ if } i \geq k;
	\end{aligned}\right.
	\qquad
	\psi(i) := \left\{\begin{aligned}
		&i & \text{ if } i \leq k, \\
		&i-1 & \text{ if } i > k.
	\end{aligned}\right.
	\]
	Note that both maps change the Stanley depth by $0$ with respect to $g \in \NN$ and $g+1 \in \NN$ or vice versa, for each $g \geq k$.
	Moreover, by defining the maps $\hat{\Phi}, \hat{\Psi}$ as in Remark \ref{rem:id}, we have $\hat{\Phi}^{-1}(I') = I$, $\hat{\Psi}^{-1}(I) = I'$, and similarly for $J$.
	So the claim follows from Proposition \ref{prop:sdep}.
\end{proof}

The following result generalizes \cite[Lemma 2.3]{S}.
\begin{proposition}\label{prop:unsus2}
	Let $I'$ and $J'$ be the monomial ideals in $\KK[X_1,\ldots, X_n,X_{n+1}]$ obtained from $I$ and $J$ in the following way:
	In each minimal generator of $I$ and $J$, every occurrence of the variable $X_n$ is replaced with the product $X_n X_{n+1}$.
	Then
	\[
	\sdepth\ I' / J' = \sdepth\ I / J + 1.
	\]
\end{proposition}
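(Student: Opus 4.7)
The plan is to prove the two inequalities $\sdepth\ I'/J' \le \sdepth\ I/J + 1$ and $\sdepth\ I'/J' \ge \sdepth\ I/J + 1$ separately, each as a direct application of \Cref{prop:sdep}, using a different tailored monotonic map between $\NN^n$ and $\NN^{n+1}$ in each direction.

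For the upper bound I would use the \emph{diagonal embedding}
\[
\phi \colon \NN^n \to \NN^{n+1}, \qquad \phi(a_1, \ldots, a_n) = (a_1, \ldots, a_{n-1}, a_n, a_n).
\]
Here $\Phi(X^a)$ is precisely the monomial obtained from $X^a$ by the substitution $X_n \mapsto X_n X_{n+1}$, so $\Phi$ sends every minimal generator of $I$ (resp.\ $J$) to the corresponding generator of $I'$ (resp.\ $J'$); combined with the elementary observation $\phi(a) \le \phi(b) \Leftrightarrow a \le b$, this gives $\Phi^{-1}(I') = I$ and $\Phi^{-1}(J') = J$. A direct computation shows that $\phi^{-1}([a', b'])$ is either empty or a single interval, with $n$-th coordinate running over $[\max(a'_n, a'_{n+1}), \min(b'_n, b'_{n+1})]$; a short case analysis on how many of $b'_n, b'_{n+1}$ equal $g_n$ then verifies that $\phi$ changes the Stanley depth by $-1$ with respect to $g$ and $g' := (g_1, \ldots, g_{n-1}, g_n, g_n) = \phi(g)$. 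Then \Cref{prop:sdep} delivers $\sdepth\ I/J \ge \sdepth\ I'/J' - 1$.

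For the lower bound I would use the \emph{diagonal projection}
\[
\pi \colon \NN^{n+1} \to \NN^n, \qquad \pi(c_1, \ldots, c_{n+1}) = (c_1, \ldots, c_{n-1}, \min(c_n, c_{n+1})).
\]
Divisibility of a monomial by a generator of $I'$ (or $J'$) is controlled exactly by $\min(c_n, c_{n+1})$, yielding $\Pi^{-1}(I) = I'$ and $\Pi^{-1}(J) = J'$. For an interval $[a, b] \subseteq [0, g]$, the restricted preimage $\pi^{-1}([a, b]) \cap [0, g']$ is the product of $[a, b]$ in the first $n-1$ coordinates with the L-shape $L_{a,b} := \{(c_n, c_{n+1}) \in [a_n, g_n]^2 : \min(c_n, c_{n+1}) \le b_n\}$ in the last two. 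When $b_n < g_n$ I would partition $L_{a,b}$ as the disjoint union
\[
[a_n, b_n] \times [a_n, g_n] \ \sqcup\ [b_n + 1, g_n] \times [a_n, b_n],
\]
whose tops $(b_n, g_n)$ and $(g_n, b_n)$ each saturate one of the last two coordinates of $g' = (g_1, \ldots, g_{n-1}, g_n, g_n)$; when $b_n = g_n$ the L-shape is already a single interval $[a_n, g_n]^2$ with top $(g_n, g_n)$. In either case the count in condition~(2) of \Cref{def:1} comes out to exactly one more than the count for $b$ on $g$, so $\pi$ changes the Stanley depth by $+1$, and \Cref{prop:sdep} yields $\sdepth\ I'/J' \ge \sdepth\ I/J + 1$.

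The main obstacle is identifying the right partition of the L-shape in the lower bound: the naive three-piece decomposition (which adds the diagonal square $[a_n, b_n]^2$ as a separate piece) has a top $(b_n, b_n)$ missing $g_n$ in both of the last two coordinates, which would only give $\ell = 0$ and lose the crucial gain of $+1$. The two-piece partition above is arranged precisely so that every top sits on the boundary $\{c_n = g_n\} \cup \{c_{n+1} = g_n\}$ of $[0, g']$, which is what forces $\ell = +1$.
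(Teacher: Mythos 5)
Your proof is correct and takes essentially the same approach as the paper: for the key inequality $\sdepth I'/J' \geq \sdepth I/J + 1$ you use the padded $\min$ map $\NN^{n+1}\to\NN^n$ with the identical two-piece interval partition $[a_n,b_n]\times[a_n,g_n]\ \sqcup\ [b_n+1,g_n]\times[a_n,b_n]$, and for the reverse inequality you use the diagonal map $a\mapsto(a,a)$, which the paper dispatches by citing \Cref{lem:1dim} (or alternatively an external reference) while you spell out the short case analysis that lemma encapsulates.
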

\begin{proof}
	Consider the map $\phi: \NN^2 \rightarrow \NN$, $\phi(a,b) := \min(a,b)$.
	Let $g$ be the maximal degree of $X_n$ in a minimal generator of $J$ or $I$.
	Then
	\[
	\phi^{-1}([a,b]) \cap [(0,0),(g,g)] = [(a,a), (b,g)] \dot\cup [(b+1, a), (g, b)]
	\]
	for $0 \leq a \leq b \leq g$, hence $\phi$ changes the Stanley depth by $1$ with respect to $(g,g)$ and $g$ (remark that the union is disjoint).
	By defining the map $\hat{\Phi}$ as in Remark \ref{rem:id}, we have $\hat{\Phi}^{-1}(I) = I'$ and similarly for the ideal $J$.
	Therefore, Proposition \ref{prop:sdep} yields the inequality ``$\geq$''.
	The reverse inequality ``$\leq$'' is a consequence of \cite[Proposition 5.2]{IJ}.
	Alternatively, the reverse inequality follows from Lemma \ref{lem:1dim}, applied to the diagonal map $\NN \rightarrow \NN^2, a \mapsto (a,a)$.
\end{proof}

\begin{remark}
	\begin{enumerate}
	\item Let us remark that the two propositions in this section are enough to reduce the computation of a monomial complete intersection to the case of the maximal ideal. We follow the line of reasoning of Shen \cite[Theorem 2.4]{S}. A monomial complete intersection ideal $I$ is generated by monomials with pairwise disjoint support. So by Proposition \ref{prop:unsus1}, its Stanley depth does not change if we replace $I$ with its radical. Then Proposition \ref{prop:unsus2} allows us to replace $I$ by an ideal generated by variables.
	\item Note that if $I$ is a Stanley-Reisner ideal, then the process described in Proposition \ref{prop:unsus2} corresponds to the one-point suspension \cite{JL}, so topologically it is a suspension. This gives a geometric explanation why the depth increases exactly by $1$.
	\end{enumerate}
\end{remark}

\section{Acknowledgements}

The authors would like to thank  Dorin Popescu and Yi-Huang Shen for their suggestions and useful comments.

\bibliographystyle{alpha}
\bibliography{LCM}

\end{document}